\def\fatx{\mathbf{x}}
\def\fatnu{\mathbf{\nu}}
\def\fatmu{\boldsymbol{\mu}}
\def\fatx{\mathbf{x}}
\def\faty{\mathbf{y}}
\def\fatY{\mathbf{Y}}
\def\fatnu{\boldsymbol{\nu}}
\def\voxsize{h}
\def\voxelsize{h}
\def\numvoxels{N}
\def\rradius{\sigma}
\def\diffconst{D}
\def\krmi{k_a}
\def\kdmi{k_d}
\def\krme{k_a^{\mathrm{meso}}}
\def\krmenl{k_a^{\mathrm{meso}}}
\def\kdme{k_d^{\mathrm{meso}}}
\def\kme{k_{\mathrm{CK}}}
\def\hstarnl{h^*_{\krmi,g}}
\def\hstarinfnl{h^*_{\infty,g}}
\def\hstarinf{h^*_{\infty}}
\def\mesobind{\tau_{\mathrm{meso}}}
\def\microbind{\tau_{\mathrm{micro}}}
\def\mesobindnl{\tau_{\mathrm{meso,r}}}
\def\mesobindloc{\tau_{\mathrm{meso}}}
\def\mesorebind{\tau_{\mathrm{meso}}^{\mathrm{rebind}}}
\def\microrebind{\tau_{\mathrm{micro}}^{\mathrm{rebind}}}
\def\mesorebindnl{\tau_{\mathrm{meso,r}}^{\mathrm{rebind}}}
\def\mesorebindloc{\tau_{\mathrm{meso}}^{\mathrm{rebind}}}
\def\mesorebindr{\tau_{\mathrm{meso,r}}^{\mathrm{rebind}}}
\def\mesobindr{\tau_{\mathrm{meso,r}}}
\newcommand{\mesobindrv}[1]{\tau_{\mathrm{meso,#1}}}
\newcommand{\mesorebindrv}[1]{\tau_{\mathrm{meso,#1}}^{\mathrm{rebind}}}
\def\Gdfull{G^{(d)}(\voxelsize,\rradius)}
\def\localrated{\rho^{(d)}}
\def\localratedfull{\localrated(\krmi,\voxsize)}
\def\mesobindlocal{\mesobind^{\rho}}
\def\mesorebindlocal{\tau_{\mathrm{meso}}^{\mathrm{rebind},\rho}}
\def\timejump{t_j}
\newcommand{\taunl}[1]{\tau_{#1}^g}
\newcommand{\tauloc}[1]{\tau_{#1}}
\newcommand{\numsteps}[2]{M_{#1}^{#2}}
\newcommand{\step}[1]{M_s^{#1}}
\newcommand{\revreaction}[5]{#1+#2\overset{#3}{\underset{#4}{\rightleftarrows}}#5}
\newtheorem{theorem}{Theorem}
\newtheorem{lemma}{Lemma}
\begin{document}

\begin{abstract}
It has been established that there is an inherent limit to the accuracy of the reaction-diffusion master equation.
Specifically, there exists a fundamental lower bound on the mesh size, below which the accuracy deteriorates as the mesh is refined further. In this paper we extend the standard reaction-diffusion master equation to allow molecules occupying neighboring voxels to react, in contrast to the traditional approach in which molecules react only when occupying the same voxel. We derive reaction rates, in two dimensions as well as three dimensions, to obtain an optimal match to the more fine-grained Smoluchowski model, and show in two numerical examples that the extended algorithm is accurate for a wide range of mesh sizes, allowing us to simulate systems intractable with the standard reaction-diffusion master equation. In addition, we show that for mesh sizes above the fundamental lower limit of the standard algorithm, the generalized algorithm reduces to the standard algorithm. We derive a lower limit for the generalized algorithm, which, in both two dimensions and three dimensions, is on the order of the reaction radius of a reacting pair of molecules.  

\end{abstract}

\title{Reaction rates for a generalized reaction-diffusion master equation}

\author{Stefan Hellander}
\affiliation{Department of Computer Science, University of California,Santa Barbara, CA 93106-5070 Santa Barbara, USA.}
\author{Linda Petzold}
\affiliation{Department of Computer Science, University of California,Santa Barbara, CA 93106-5070 Santa Barbara, USA.}
\maketitle

\maketitle


\section{Introduction}

Stochastic modeling has become a ubiquitous tool in the study of biochemical reaction networks \cite{Lawson:2013,Howard,FaEl,Sturrock1:2013,Sturrock2:2013}, as the traditional approach of deterministic modeling has been shown to be unsuitable for some systems where species are present in low copy numbers, or systems with spatial inhomogeneities \cite{TaTNWo10,FaEl}. Instead stochastic, spatially homogenous or inhomogeneous, models are employed.

Stochastic modeling can be carried out on multiple different scales. For processes occurring on the time scales typical of living cells we consider three different modeling scales: the spatially homogeneous well-mixed scale, the mesoscopic spatially heterogeneous scale, and the microscopic particle-tracking scale. In this paper the focus is on spatially heterogeneous modeling.

A prevalent model on the mesoscopic scale is the standard reaction-diffusion master equation (RDME), in which diffusion of individual molecules is modeled by discrete jumps between voxels, while reactions occur with a given intensity once molecules occupy the same voxel. The next subvolume method (NSM) \cite{ElEh04} is an efficient algorithm for generating single trajectories of the system. The NSM has been implemented in several software packages, including URDME \cite{URDME_BMC}, PyURDME (www.pyurdme.org), STEPS \cite{steps}, and MesoRD \cite{mesoRD}. It is also available as a part of larger simulation frameworks such as StochSS (www.stochss.org) and E-Cell \cite{Tomita99}.

On the microscopic scale we model the molecules as hard spheres moving by normal diffusion. We track the continuous position of individual molecules, and molecules react with a probability upon collision. This model is commonly referred to as the Smoluchowski model \cite{Smol}, with the addition of a Robin boundary condition at the reaction radius of a pair of molecules. Algorithms aimed at accurately and efficiently simulating the Smoluchowski model for general systems have been implemented in E-Cell \cite{Tomita99}, Smoldyn \cite{AnAdBrAr10}, and MCell \cite{MCell08}.

It has previously been shown that there is an inherent bound of several reaction radii on the spatial accuracy of the RDME compared to the Smoluchowski model \cite{HHP,HHP2}. It was shown in \cite{HHP2} that by choosing correct mesoscopic reaction rates, the RDME could be made accurate all the way down to this lower bound. However, for mesh resolutions below this lower bound, the accuracy deteriorates. 

In this paper we generalize the standard RDME by letting molecules occupying neighboring voxels react. Henceforth we refer to this generalization as the \emph{generalized RDME}. Similar generalizations have been considered previously in \cite{IsaacsonCRDME,FangeSRDME}. In \cite{IsaacsonCRDME}, Isaacson discretizes the Doi model \cite{Doi1} to obtain a convergent RDME. In \cite{FangeSRDME}, reaction rates are derived for a spherical model and applied to the RDME on a Cartesian mesh. In this paper we take a fundamentally different approach. By deriving reaction rates to match certain statistics of the Smoluchowski model, we arrive at analytical expressions for the reaction rates, and show that this approach yields accurate results down to a fundamental lower limit on the mesh size. This mesh size will be on the order of the reaction radius of two molecules.

Importantly, we derive reaction rates under specific assumptions on the dynamics of dissociating molecules, and we show with a simple example that not doing so may lead to reaction rates that are inaccurate for certain systems. We thus argue that it is crucial to take dissociations into account in the derivation of reaction rates for the generalized RDME.

The outline of the paper is as follows. In Section \ref{sec:background} we review the Smoluchowski model and the RDME, and how they are connected through the mesoscopic reaction rates. In Section \ref{sec:theory} we describe the generalized algorithm, and derive accurate mesoscopic reaction rates as well as the lower limit on the mesh size. Finally, in Section \ref{sec:results}, we study two numerical examples, demonstrating the accuracy of the generalized RDME, and how it can be used to simulate systems that are intractable with the standard RDME.

\section{Background}
\label{sec:background}

\subsection{Microscopic level}

At this level of modeling we track the continuous position of individual molecules moving by normal diffusion. Each species $S_i$ has a diffusion constant $D_i$ and a reaction radius $\sigma_i$. Consider two molecules, one of species $S_1$ and one of species $S_2$, with positions $\fatx_{1n}$ and $\fatx_{2n}$ at time $t_n$. The molecules can react according to $S_1+S_2\overset{\krmi}{\underset{\kdmi}{\rightleftarrows}} S_3$, where $S_3$ is some product. The probability distribution function (PDF) $p(\fatx_1,\fatx_2,t|\fatx_{1n},\fatx_{2n},t_n)$ represents the probability that the positions of the molecules are given by $\fatx_1$ and $\fatx_2$ at time $t$; $p$ then solves the Smoluchowski equation
\begin{align}
\partial_t p = D_1\Delta_{\fatx_1}p+D_2\Delta_{\fatx_2}p.
\end{align}
It can be shown that with the change of variables
\begin{align}
&\fatY = \sqrt{D_2}{D_1}\fatx_1+\sqrt{D_1}{D_2}\fatx_2\\
&\faty = \fatx_2-\fatx_1,
\end{align}
we obtain two independent equations, where the equation for $\fatY$ describes free diffusion, while the equation for $\faty$ becomes
\begin{align}
\label{eq:smolu1}
\partial_t p(\faty,t) = D\Delta_{\faty}p,
\end{align}
where $D=D_1+D_2$. Let $\rradius = \sigma_1+\sigma_2$ be the sum of the reaction radii. We introduce a reactive Robin boundary condition at $\rradius$
\begin{align}
\label{eq:smolu2}
K\frac{\partial p_{\faty}}{\partial n}\bigg|_{\| \faty \|=\rradius} = \krmi p_{\faty}(\| \faty\| = \rradius,t),
\end{align}
where
\begin{align}
K = \begin{cases}
2\pi\rradius D,\quad (2D)\\
4\pi\rradius^2D,\quad (3D),
\end{cases}
\end{align}
and where $\krmi$ is the microscopic reaction rate. The initial condition is given by $p_{\faty}(\faty,t_n) = \delta(\faty-\faty_n)$, and, since we assume that there is no outer boundary, we enforce $p_{\faty}(\| \faty \| \to\infty,t) = 0$. This equation can be solved analytically \cite{CarJae}, but the solution is difficult and expensive to evaluate numerically. Applying an operator split method to \eqref{eq:smolu1}-\eqref{eq:smolu2} can significantly simplify the process of sampling new positions from the PDF \cite{SHeLo11}.

An $S_3$ molecule is assumed to dissociate according to an exponential distribution with mean $\kdmi$. Following a dissociation, the two products $S_1$ and $S_2$ are placed in contact a distance of $\sigma$ apart.

A system of more than two molecules is not amenable to the direct approach of solving for the full PDF, due to the high dimensionality of the problem. A common approach is instead to approximate the full problem as a set of one- and two-body problems, by dividing the system into subsets of single and pairs of molecules according to the distances between them. We can obtain a good approximation of the full problem by updating each subset independently during short time steps $\Delta t$. This algorithm is commonly referred to as Green's function reaction dynamics (GFRD) \cite{ZoWo5a,ZoWo5b}. All microscale computations in this paper are carried out using a variant of the GFRD algorithm, described in \cite{SHeLo11}.

\subsection{Reaction-diffusion master equation}

At the mesoscopic scale the simulation domain is discretized by $N$ non-overlapping voxels, and diffusion is modeled as discrete jumps between the nodes of the voxels. The mesh may be either a Cartesian mesh, or an unstructured, tetrahedral (3D) or triangular (2D), mesh. A Cartesian mesh is suitable if the domain is simple, for instance a square or a cube, while an unstructured mesh has advantages for complicated domains. The jump coefficients between voxels are given by $h^2/(2dD)$ in the case of a Cartesian mesh, where $h$ is the width of a voxel, $d$ the dimension, and $D$ the diffusion rate of the molecule. For an unstructured mesh, the jump coefficients can be obtained from a finite element discretization of the diffusion equation \cite{uRDME}. Reactions occur with some intensity when molecules occupy the same voxel. 

Let $p(\fatx,t|\fatx_n,t_n)$ be the probability that the system is found in state $\fatx$ at time $t$, given that it was in state $\fatx_n$ at time $t_n$. For brevity of notation, let $p(\fatx,t) = p(\fatx,t|\fatx_n,t_n)$. Let $\fatx_{i\cdot}$ and $\fatx_{\cdot j}$ denote the $i$-th row and the $j$-th column of the $K\times S$ state matrix $\fatx$, respectively, where $S$ is the number of species of the system. 
The RDME is given by
\begin{align}
\label{eq:rdme}
\frac{\mathrm{d}}{\mathrm{dt}}p(\fatx, t) = 
&\sum_{i=1}^{N}
\sum_{r = 1}^{M}
a_{ir}(\fatx_{i \cdot}-\fatmu_{ir})p(\fatx_{1 \cdot},\ldots,\fatx_{i \cdot}-\fatmu_{ir},
\ldots,\fatx_{N \cdot}, t) \nonumber 
-\sum_{i=1}^{N}
\sum_{r = 1}^{M}
a_{ir}(\fatx_{i \cdot})p(\fatx, t)\\
&+\sum_{j=1}^{S} \sum_{i = 1}^{N} \sum_{k=1}^N d_{jik}(\fatx_{\cdot j}-\fatnu_{ijk})
p(\fatx_{\cdot 1},\ldots,\fatx_{\cdot j}-\fatnu_{ijk},
\ldots,\fatx_{\cdot S}, t) \nonumber\\
&-\sum_{j=1}^S\sum_{i=1}^{N}
\sum_{k = 1}^{N} d_{ijk}(\fatx_{\cdot j})p(\fatx, t),\nonumber\\
\end{align}
where the propensity functions of the $M$ chemical reactions are denoted by $a_{ir}(\fatx_i)$, $\fatmu_{ir}$ are the stoichiometry vectors associated with the reactions, $d_{ijk}$ are the jump coefficients, and $\fatnu_{ijk}$ are stoichiometry vectors for diffusion events.

The RDME is in general too high-dimensional to be solved by direct approaches. An alternative approach is to generate individual trajectories of the system with stochastic simulations. The NSM \cite{ElEh04} is an efficient algorithm frequently used for that purpose.

\subsection{Reaction rates for the standard reaction-diffusion master equation}

Consider a system of two molecules, one of species $A$ and one of species $B$, that react according to $A+B\overset{\krmi}{\underset{\kdmi}{\rightleftarrows}} C$, where $\krmi$ and $\kdmi$ are the microscopic reaction rates. Assume that the molecules diffuse in a square (2D) or cube (3D) with periodic boundary conditions. Without loss of generality, assume that the $B$ molecule is fixed at the origin, and that the $A$ molecule diffuses freely with a diffusion rate $D$. The $A$ molecule is initialized according to a uniform distribution.

Let $\mesobind(\krme,h)$ be the mean association time of the two molecules on the mesoscopic scale, and let $\microbind(\krmi)$ be the mean association time on the microscopic scale. Under the assumption that $\mesobind(\krme,h)=\microbind(\krmi)$ holds, it is shown in \cite{HHP,HHP2} that the mesoscopic association rate is given by
\begin{align}
\label{eq:local-meso-rate}
\krme = \localratedfull = \frac{\krmi}{h^d}\left[ 1+\frac{\krmi}{D}\Gdfull\right]^{-1},
\end{align}
where $d$ is the dimension,
\begin{align}
\Gdfull = \begin{cases}
\frac{1}{2\pi}\log\left(\pi^{-\frac{1}{2}}\frac{h}{\rradius}\right)-\frac{1}{4}\left(\frac{3}{2\pi}+C_2\right)\quad \mathrm{(2D)}\\
\frac{1}{4\pi\sigma}-\frac{C_3}{6\voxsize}\quad \mathrm{(3D)},
\end{cases}
\end{align}
and
\begin{align}
\label{eq:C_consts}
C_d\approx \begin{cases}
0.1951,\quad d=2\\
1.5164,\quad d=3.
\end{cases}
\end{align}
The microscopic parameters are $\rradius$, the sum of the reaction radii of the molecules, $\diffconst$, the sum of the diffusion constants, and $\krmi$, the microscopic reaction rate. To simplify the notation somewhat, we let $\mesobindlocal(\krmi,\voxsize):=\mesobind(\localrated(\krmi,\voxsize),\voxsize)$. For a reversible reaction we match the mean binding time for $h>\hstarinf$, where
\begin{align}
\label{eq:local-h-star-inf}
\hstarinf \approx \begin{cases}
\sqrt{\pi}\exp\left(\frac{3+2\pi C_2}{4}\right)\sigma\approx 5.1\sigma,\quad (2D)\\
\frac{2}{3}\pi C_3\sigma\approx 3.2\sigma,\quad (3D).
\end{cases}
\end{align}

Let $\mesorebind(\krme,\voxsize)$ and $\microrebind(\krmi)$ denote the average rebinding times---that is, the average time until two molecules react, given that they have just dissociated---on the mesoscopic and microscopic scale, respectively. Again, to simplify notation, we let $\mesorebindlocal(\krmi,\voxsize):=\mesorebind(\localrated(\krmi,\voxsize),\voxsize)$. The rebinding times can be written in terms of the average binding times
\begin{align}
&\mesorebindlocal(\krmi,\voxsize) = \mesobindlocal(\krmi,\voxsize)-\mesobindlocal(\infty,\voxsize)\label{eq:rebind1-meso}\\
&\microrebind(\krmi) = \microbind(\krmi)-\microbind(\infty),\label{eq:rebind1-micro}
\end{align}
where, for simplicty of notation, $\mesobindlocal(\krmi\to\infty,\voxsize)$ and $\microbind(\krmi\to\infty)$ are denoted by $\mesobindlocal(\infty,h)$ and $\microbind(\infty)$, respectively. That \eqref{eq:rebind1-meso} and \eqref{eq:rebind1-micro} hold can be realized by considering the following argument. Given a uniform initial distribution, $\mesobind(\infty)$ is the time until the molecules are in the same voxel for the first time. By subtracting that time from the total binding time, we obtain the rebinding time. A similar argument holds for the microscopic case. We immediately see that because $\mesobindlocal(\krmi,\voxsize)=\microbind(\krmi)$ holds, the rebinding times will match if and only if $\mesobindlocal(\infty,\voxsize)=\microbind(\infty)$. This holds for $h=\hstarinf$, and consequently
\begin{align}
&\mesorebindlocal(\krmi,h)>\microrebind(\krmi) \text{ for }\voxsize>\hstarinf\\
&\mesorebindlocal(\krmi,h)=\microrebind(\krmi) \text{ for }\voxsize=\hstarinf\\
&\mesorebindlocal(\krmi,h)<\microrebind(\krmi) \text{ for }\voxsize<\hstarinf.
\end{align}
As a mesoscopic dissociation event is a combination of microscopic dissociation and the diffusion required to get well mixed in a voxel, we require that $\mesorebind\geq \microrebind$ should hold. For $h<\hstarinf$ we cannot match the mean binding time while satisfying $\mesorebind\geq \microrebind$, and the accuracy of the RDME consequently deteriorates with decreasing $\voxsize$. Thus, $\hstarinf$ is the finest spatial resolution attainable with the standard RDME.

For a given $h>\hstarinf$, we can compute the error in rebinding time as
\begin{align}
\label{eq:rebind-loc-error}
\left| \mesorebindlocal(\krmi,h)-\microrebind(\krmi) \right| = \left| \mesobind(\infty,\voxsize)-\microbind(\infty) \right|,
\end{align}
where the right-hand side thus is a measure of how well-resolved a system is. Details of the above theory can be found in \cite{HHP2}.

\section{The generalized reaction-diffusion master equation}
\label{sec:theory}

In the standard RDME, molecules react only when they occupy the same voxel. In this section we extend this approach by allowing molecules occupying neighboring voxels to react. To connect the standard RDME to the microscopic Smoluchowski model, we determined the rate with which molecules react when occupying the same voxel. For the generalized RDME we need to obtain the rates for molecules occupying the same voxel, but also the rates for molecules occupying neighboring voxels. In \cite{HHP,HHP2} we derive rates for the standard RDME by matching the mean association times on the two scales. To uniquely determine both of the rates for the generalized RDME, we need an additional constraint.

In Sec.~\ref{sec:algorithm} we outline the algorithm, and in Sec.~\ref{sec:reaction-rates} we derive mesoscopic parameters by trying to match certain statistics of the microscopic model to the corresponding statistics on the mesoscopic scale. In Sec.~\ref{sec:dissociation} we determine the dissociation rate of a reversibly reacting pair of molecules, and in Sec.~\ref{sec:algsummary} we collect the results and summarize the algorithm.

\subsection{Generalized reactions}
\label{sec:algorithm}

Consider a domain $\Omega$ discretized by a Cartesian mesh, and a single reversible reaction $\revreaction{A}{B}{\krmi}{\kdmi}{C}$. We extend the generalized RDME to allow reactions between molecules occupying neighboring voxels. Thus, if a molecule of species $A$ occupies the same voxel as a molecule of species $B$, they react with an intensity given by $k_0$. If the molecules instead occupy neighboring voxels they react with an intensity of $k_1$, where two voxels are neighbors if they share one side.

We can choose $k_0$ and $k_1$ freely, with the restriction that the total intensity should be constant. Call the total intensity $\krme$. Let $d$ be the dimension. Then, since each voxel has $2d$ neighbors, $k_0$ and $k_1$ must satisfy
\begin{align}
\label{eq:total_intensity_constant}
k_0+2dk_1 = \krme.
\end{align}
Thus we can write 
\begin{align}
\label{eq:r_and_krmeso}
&k_0 = (1-2dr)\krme \\
&k_1 = r\krme,
\end{align}
where $0\leq r \leq 1/(2d)$.

Now assume that a molecule of species $C$ dissociates. We must determine where to place the two products $A$ and $B$. It may seem natural to place them in the same voxel with probability $1-2dr$, and in neighboring voxels with probability $2dr$. While this arguably would yield the most accurate results compared to microscopic simulations for a single reversible reaction, we show below that this approach is unsuitable in general.

First consider the single irreversible dissociation given by
\begin{align}
\label{eq:irr_dissociation}
P\xrightarrow{k_{deg}}S_1+S_2.
\end{align}
In this case the microscopic and mesoscopic rates will be the same; thus $\kdme=k_{deg}$, and the products are placed in the same voxel. Now consider that in addition to \eqref{eq:irr_dissociation} we have the following reactions:
\begin{align}
S_1{\overset{k^*}{\rightarrow}}S_1^*\label{eq:add_reactions1}\\
S_1^*+S_2\overset{\krmi}{\underset{\kdmi}{\rightleftarrows}}S_3\label{eq:add_reactions2}.
\end{align}
Again, \eqref{eq:add_reactions1} is an irreversible unimolecular reaction and thus the mesoscopic and microscopic rates are the same. Now, if $k^*$ is large, the system \eqref{eq:irr_dissociation}-\eqref{eq:add_reactions2} will be well approximated by
\begin{align}
\label{eq:reduced_system}
P\xrightarrow{k_{deg}}S_1^*+S_2\overset{\krmi}{\underset{\kdmi}{\rightleftarrows}}S_3.
\end{align}
Had we derived rates for reaction \eqref{eq:add_reactions2} assuming that dissociating molecules are placed in neighboring voxels with some probability, we can see that the sequence \eqref{eq:reduced_system} will be incorrectly simulated, as $S_1$ and $S_2$ are placed in the same voxel with probability 1 when $P$ dissociates. Specifically, the rebinding dynamics of $S_1^*$ and $S_2$ will be incorrect, as the rebinding time will depend on whether they were produced from a dissociating $S_3$ or a dissociating $P$.

To summarize:
\begin{itemize}
\item Reactive molecules occupying the same voxel react with intensity $(1-2dr)\krme$.
\item Reactive molecules in neighboring voxels react with intensity $r\krme$.
\item When a molecule dissociates, the products are placed in the same voxel with probability 1.
\end{itemize}

The parameters $r$ and $\krme$ now have to be determined from the microscopic parameters $\krmi$, $\sigma$ and $D$.

\subsection{Reaction rates}
\label{sec:reaction-rates}

Consider the reversible reaction $A+B\overset{\krmi}{\underset{\kdmi}{\rightleftarrows}}C$. Assume that the initial state of the system is given by one molecule of species $A$ and one molecule of species $B$ in a square (2D) or a cubic (3D) domain $\Omega$ of width $L$ with periodic boundary conditions. For simplicity, and without loss of generality, assume that the $A$ molecule is fixed at the origin while the $B$ molecule has a uniform initial distribution and a diffusion rate $D=D_A+D_B$. On the microscopic scale the $B$ molecule moves by continuous Brownian motion. On the mesoscopic scale, $\Omega$ is subdivided into non-overlapping squares or cubes of width $h$. The $B$ molecule thus jumps between voxels with an intensity of $k_j = 2dD/h^2$, with $d$ the dimension. 
Let $\mesobindr(\krme,h)$ denote the average time until the molecules react on the mesoscopic scale with the generalized RDME.

For the local RDME, it was shown in \cite{HHP,HHP2} that by enforcing the constraint $\mesobind=\microbind$ we obtain mesoscopic reaction rates as given by \eqref{eq:local-meso-rate}. In addition, it was shown that $\mesorebind$ approaches $\microrebind$ from above as $\voxsize\to\hstarinf$. Therefore it seems reasonable to require that, with the generalized RDME, we obtain an approximation of $\microrebind$ that is equal to or better than the approximation we obtain with the standard RDME. The first constraint is therefore that the mean binding time agrees between the mesoscopic and the microscopic scales
\begin{align}
\mesobindr(\krme,h) = \microbind(\krmi), \label{eq:nonlocal-constraint1}
\end{align}
and the second constraint will be that, given \eqref{eq:nonlocal-constraint1}, $\krme$ and $r$ minimizes the difference between the rebinding times at the mesoscopic and microscopic scales; that is, we want to minimize
\begin{align}
\left| \mesorebindr(\krme,h)-\microrebind(\krmi) \right|,\label{eq:nonlocal-constraint2}
\end{align}
under the assumption that \eqref{eq:nonlocal-constraint1} holds, where $\mesobindr$ is the average binding time (dependent on $r$ and $\krme$), and where $\mesorebindr$ is the average rebinding time, in the generalized RDME. Note that with \eqref{eq:nonlocal-constraint1} satisfied we have $\mesobindrv{0}(\krme,h) = \mesobind(\krme,h)$ and $\mesorebindrv{0}(\krme,h) = \mesorebind(\krme,h)$.

\subsubsection{Mean mesoscopic binding time}
Again, assume that we have species $A$ and $B$, with one molecule of each, and that the $A$ molecule is fixed. The $B$ molecule is initialized according to a uniform distribution, and diffuses with diffusion rate $D$.

We start by deriving the mesoscopic mean binding time. To this end, let $\step{i}$ denote the average number of diffusive jumps required for the $B$ molecule to reach a voxel at distance $i$ from the $A$ molecule, where the distance between two voxels is defined to be the smallest number of discrete jumps required to move from one voxel to the other. Let the set of all voxels at a distance $i$ from the $A$ molecule be denoted by $d_i$, let $\timejump $ denote the average time for a diffusive jump, and let $\tau_i$ denote the average time for the $B$ and the $A$ molecule to react, given that the $B$ molecule is occupying a voxel at distance $i$ from the $A$ molecule. Thus, $\timejump  = h^2/(2dD)$, and
\begin{align}
\label{eq:tau_meso_simple}
\mesobindr(\krme,\voxsize) = \step{1}\timejump +\tau_1.
\end{align}
The first term, $\step{1}\timejump $, represents the average time required for the $B$ molecule to reach $d_1$. The second term, $\tau_1$, represents the remaining time until the molecules react, given that the $B$ molecule occupies a voxel in $d_1$. To obtain $\mesobindr$ we now derive analytical expressions for $\step{1} $ and $\tau_1$.

\begin{lemma}
Let $N$ be the total number of voxels in the mesh. Then
\begin{align}
\label{eq:ns1}
\step{1}  = \begin{cases}
\pi^{-1}N\log{N}+(C_2-1)N+O(1)\quad \mathrm{(2D)}\\
(C_3-1)N+O(\sqrt{N})\quad \mathrm{(3D)},
\end{cases}
\end{align}
where $C_2$ and $C_3$ are defined in \eqref{eq:C_consts}.
Let $\numsteps{i}{j} $ denote the average number of steps required to diffuse from $d_i$ to $d_j$. We have
\begin{align}
\label{eq:n21}
\numsteps{2}{1}  = \frac{N-2}{2d-1}.
\end{align}
\end{lemma}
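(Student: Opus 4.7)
The plan is to reduce both identities to two facts: Kac's return-time lemma for the simple random walk on the $N$-voxel torus (whose stationary distribution is uniform) and the classical asymptotics of the mean first-passage time $\step{0}$ to the single voxel $A$ from a uniform initial distribution, which is established from the lattice Green's function in \cite{HHP,HHP2} and has the leading behavior $\pi^{-1} N \log N + C_2 N + O(1)$ in 2D and $C_3 N + O(\sqrt{N})$ in 3D.

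For \eqref{eq:ns1}, I would prove the exact identity $\step{1} = \step{0} - (N-2)$ and then substitute the known asymptotics of $\step{0}$. Writing $\tau_A$ and $\tau_{d_1}$ for the first hitting times of $\{A\}$ and $d_1$, the key observation is a symmetry: the stabilizer of $A$ in the automorphism group of the torus, generated by coordinate permutations and sign flips, acts transitively on $d_1$ and preserves any symmetric initial distribution. Consequently the distribution of the walker when it first enters $d_1$, conditional on starting away from $A$, is uniform on $d_1$. The strong Markov property at $\tau_{d_1}$ then gives $\mathbb{E}[\tau_A - \tau_{d_1} \mid X_0 \neq A] = \mathbb{E}_{\mathrm{unif}(d_1)}[\tau_A]$, and Kac's lemma evaluates the right side: the expected return time to $A$ from $A$ is $1/\pi(A) = N$, and because the first step out of $A$ lands uniformly on $d_1$, $\mathbb{E}_{\mathrm{unif}(d_1)}[\tau_A] = N - 1$. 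Accounting for the atom of mass $1/N$ at $A$ in the uniform initial distribution (which contributes $0$ to $\step{0}$ and $1$ to $\step{1}$), short arithmetic gives $\step{0} - \step{1} = -1/N + ((N-1)/N)(N-1) = N - 2$, and substituting the two asymptotic forms for $\step{0}$ yields \eqref{eq:ns1}.

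For \eqref{eq:n21}, I would apply Kac's lemma directly to $d_1$: because $|d_1| = 2d$ and the stationary distribution is uniform, the expected return time to $d_1$ starting from the uniform distribution on $d_1$ is $N/(2d)$. A one-step analysis is then immediate because no two voxels of $d_1$ are adjacent, so each excursion from $d_1$ exits in a single jump, landing either in $\{A\}$ with probability $1/(2d)$---after which the next jump deterministically returns the walker to $d_1$, contributing $2$ to the return time---or in $d_2$ with probability $(2d-1)/(2d)$, after which by definition of $\numsteps{2}{1}$ a further expected $\numsteps{2}{1}$ steps are required. Equating $N/(2d) = (1/(2d))\cdot 2 + ((2d-1)/(2d))\,\numsteps{2}{1}$ yields $\numsteps{2}{1} = (N-2)/(2d-1)$ in one line.

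The only delicate point is the symmetry step in the first identity, which ensures the entry distribution on $d_1$ is uniform; once that is in place, both claims reduce to short arithmetic using Kac's lemma together with the known asymptotics for $\step{0}$.
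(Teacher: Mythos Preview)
Your argument for \eqref{eq:ns1} is essentially the paper's: both obtain $\step{1}$ from the known asymptotics of $\step{0}$ by subtracting $\numsteps{1}{0}=\numsteps{0}{0}-1=N-1$, invoking Kac's return-time identity $\numsteps{0}{0}=N$. Your extra care with the mass-$1/N$ atom at $A$ accounts for the discrepancy between your exact $\step{0}-\step{1}=N-2$ and the paper's $N-1$; the $O(1)$ difference is absorbed by the stated error terms.

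For \eqref{eq:n21} you take a genuinely different route: the paper extracts $\numsteps{2}{1}$ from a two-step decomposition of the return $\numsteps{0}{0}=N$, whereas you apply Kac's lemma directly to the set $d_1$. Your approach is cleaner, but the displayed equation has a bookkeeping slip that your prose does not. In the $d_2$ branch the walker spends one step to exit $d_1$ and then, as you say, ``a further expected $\numsteps{2}{1}$ steps,'' so that branch contributes $1+\numsteps{2}{1}$ to the return time, not $\numsteps{2}{1}$ alone. The correct balance reads
\begin{align*}
\frac{N}{2d}=\frac{1}{2d}\cdot 2+\frac{2d-1}{2d}\bigl(1+\numsteps{2}{1}\bigr),
\end{align*}
which yields $\numsteps{2}{1}=(N-2d-1)/(2d-1)$, differing from the stated formula by exactly $1$. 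Curiously, the paper's own decomposition \eqref{eq:n00expanded} carries the matching off-by-one (the correct leading constant is $2$, not $1+\tfrac{1}{2d}$), so both routes arrive at the same value; since only the leading $N/(2d-1)$ behaviour enters the $N\gg1$ approximation in Lemma~\ref{lemma-tau1}, nothing downstream is affected. Still, your equation does not match your words, and one of them should be corrected.
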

\begin{proof}
First note that 
\begin{align}
\label{eq:ns1_2}
\step{1}  = \step{0} -\numsteps{1}{0} . 
\end{align}
In \cite{Montroll68} it is shown that
\begin{align}
\label{eq:ns0}
\step{0}  = \begin{cases}
\pi^{-1}N\log{N}+C_2N+O(1)\quad\mathrm{(2D)}\\
C_3N+O(\sqrt{N})\quad\mathrm{(3D)}.
\end{cases}
\end{align}
Let $\numsteps{0}{0} $ be the average number of steps required to return to $d_0$, given that we start in $d_0$. The first jump of a molecule starting in $d_0$ always transfers the molecule to $d_1$, so we find that 
\begin{align}
\label{eq:n10}
\numsteps{1}{0}  = \numsteps{0}{0} -1.
\end{align} 
We know that $\numsteps{0}{0}  = N$, shown in \cite{MoWe65}. By combining \eqref{eq:ns1_2}, \eqref{eq:ns0}, and \eqref{eq:n10}, we obtain \eqref{eq:ns1}.

To obtain \eqref{eq:n21} we note that we can write $\numsteps{0}{0} $ as
\begin{align}
\label{eq:n00expanded}
\numsteps{0}{0}  = 1+\frac{1}{2d}+\frac{2d-1}{2d}\left( \numsteps{2}{1} +\numsteps{1}{0}  \right).
\end{align}
To see that the above equality holds, start by considering a molecule in $d_0$. The first jump transfers the molecule to $d_1$; the second jump transfers it back to the origin with a probability of $1/(2d)$, or to $d_2$ with a probability of $(2d-1)/(2d)$. The average number of steps required to reach $d_0$ from $d_2$, is given by the average number of steps to reach $d_1$ plus the average number of steps to reach $d_0$, given that the molecule starts in $d_1$. Now, solving \eqref{eq:n00expanded} for $\numsteps{2}{1} $ yields \eqref{eq:n21}.
\end{proof}

To obtain $\mesobindr$ for $\krmi<\infty$, it remains to determine $\tau_1$. To that end, assume that the $B$ molecule occupies a voxel in $d_1$, and that the intensity with which the molecules react in $d_1$ is given by $1/(r\krme)$. Then, to maintain a total intensity of $1/\krme$, the molecules must react with an intensity of $1/\left[(1-2dr)\krme\right]$ in $d_0$. We require that $r\geq 0$, and that $0 \leq 1-2dr \leq 1$. To simplify the notation we let $1/(r\krme)$ be denoted by $p_1$, and $1/\left[(1-2dr)\krme\right]$ by $p_0$.
\begin{lemma}
\label{lemma-tau1}
Let $\tau_1$ be the average time until the molecules react, given that the $B$ molecule occupies a voxel in $d_1$. Then
\begin{align}
\tau_1 &= \frac{(N+2d-1)p_0+(N+2d-2)\timejump }{2d(p_0+\timejump )+p_1}p_1\label{eq:tau1-th1}\\
&\approx\frac{p_0+\timejump }{p_0+2dr\timejump }\frac{N}{\krme}.\label{eq:tau1-th2}
\end{align}
\end{lemma}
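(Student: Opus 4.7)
The plan is to perform first-step analysis on the lumped Markov chain whose states are the distance classes $d_0$, $d_1$, and $d_2$, together with an absorbing ``reacted'' state. Let $T_i$ denote the expected time until reaction starting from state $d_i$; the quantity to compute is $\tau_1 = T_1$. Since reaction is only possible when $B$ is in $d_0$ or $d_1$, the chain never needs to explore beyond $d_2$ without first returning to $d_1$, so three states plus absorption suffice.

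First I would write a first-step equation at each state. From $d_0$ there are two competing exponential clocks: reaction with mean $p_0$ and a jump with mean $\timejump$, where every jump from $d_0$ lands in $d_1$. From $d_1$ there are three competing events: reaction with mean $p_1$, a jump to the unique neighbor in $d_0$ (conditional probability $1/(2d)$ given a jump), and a jump to one of the other $2d-1$ neighbors, which all lie in $d_2$ on a Cartesian mesh (conditional probability $(2d-1)/(2d)$). From $d_2$ reaction cannot happen, so the molecule simply diffuses back to $d_1$; invoking the result $\numsteps{2}{1}=(N-2)/(2d-1)$ from the first half of this lemma, I would write $T_2 = \frac{N-2}{2d-1}\timejump + T_1$.

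Next I would substitute $T_2$ and then $T_0$ into the $T_1$ equation to obtain a single linear equation in $T_1$. The algebra is routine: the $T_1$ terms on the right-hand side combine with $T_1$ on the left into a factor $2d(p_0+\timejump)+p_1$ in the denominator, while the source terms assemble into the numerator $[(N+2d-1)p_0+(N+2d-2)\timejump]\,p_1$ of \eqref{eq:tau1-th1}. For the asymptotic form \eqref{eq:tau1-th2}, I would approximate $N+2d-1 \approx N+2d-2 \approx N$ and use $p_0 = 1/[(1-2dr)\krme]$ together with $p_1 = 1/(r\krme)$. The key algebraic simplification is the identity $p_0(1-2dr)/r = p_1$, which lets the denominator be rewritten as $\krme\, p_1\,(p_0+2dr\,\timejump)$; after the $p_1$ cancels, the compact form of \eqref{eq:tau1-th2} drops out.

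The main obstacle is the justification of the lumping, specifically the use of $\numsteps{2}{1}$ in the equation for $T_2$. After a jump from $d_1$ into $d_2$ the $B$ molecule is not uniformly distributed on $d_2$---for instance in two dimensions the ``diagonal'' voxels $(\pm 1,\pm 1)$ receive twice the weight of the ``axis'' voxels $(\pm 2,0),(0,\pm 2)$---so replacing its conditional expected first-passage time to $d_1$ by the symmetrically averaged quantity $\numsteps{2}{1}$ is a mean-field simplification. On a periodic mesh, however, the corrections this introduces are of strictly lower order in $N$ than the leading $O(N)$ terms retained in \eqref{eq:tau1-th1}, consistent with the $O(1)$ and $O(\sqrt{N})$ residuals already dropped in \eqref{eq:ns1}; this renders the lumping self-consistent with the asymptotic form \eqref{eq:tau1-th2} that is ultimately used.
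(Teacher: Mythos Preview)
Your proposal is correct and follows essentially the same route as the paper: first-step analysis on the states $d_0,d_1,d_2$, the same expression $\tau_2=\numsteps{2}{1}\timejump+\tau_1$ with $\numsteps{2}{1}=(N-2)/(2d-1)$, and the same substitution-and-solve to reach \eqref{eq:tau1-th1}, followed by the $N\gg 1$ reduction to \eqref{eq:tau1-th2}.

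One minor remark on your final paragraph: the lumping is in fact exact, not merely a leading-order mean-field simplification. The point-symmetry group of the Cartesian lattice at the origin acts transitively on $d_1$, so a $B$ molecule that is uniform on $d_1$ remains uniform upon every return to $d_1$; and the quantity $\numsteps{2}{1}$ was derived in Lemma~1 precisely for the entry distribution on $d_2$ induced by a jump from uniform $d_1$. Hence \eqref{eq:tau1-th1} holds with equality, as the paper states, and your caveat about lower-order corrections is unnecessary.
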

\begin{proof}
Let $t_e^0$ and $t_e^1$ denote the average time until the next event fires, given that the $B$ molecule occupies a voxel in $d_0$ or $d_1$, respectively. Then $t_e^0 = 1/(p_0^{-1}+\timejump ^{-1})$ and $t_e^1 = 1/(p_1^{-1}+\timejump ^{-1})$. 

By assumption, the $B$ molecule initially occupies a voxel in $d_1$. The next event can either be: (1) a diffusive jump, with probability $p_1/(p_1+\timejump )$, or (2) a reaction event with probability $\timejump /(p_1+\timejump )$.

Now assume that the next event is a diffusion event. Then: (1.1) the molecule jumps to $d_2$ with probability $(2d-1)/2d$, or (1.2) the molecule jumps to $d_0$ with probability $1/(2d)$. Assume that the molecule jumps to $d_0$. Then the next event is: (1.2.1) a reaction with probability $\timejump /(p_0+\timejump )$, or (1.2.2) diffusion to $d_1$ with probability $p_0/(p_0+\timejump )$. Thus, if the molecule is in the state (1.2), the time until the molecules react is given by
\begin{align}
\label{eq:tau1p_tau0}
\tau_0 = \frac{\timejump }{p_0+\timejump }t_e^0+\frac{p_0}{p_0+\timejump }(t_e^0+\tau_1) = t_e^0+\frac{p_0}{p_0+\timejump }\tau_1.
\end{align}
Now instead assume that the molecule is in the state (1.1). The molecules cannot react until the $B$ molecule reaches $d_1$, and thus the average time until the molecules react is given by
\begin{align}
\label{eq:tau1p_tau2}
\tau_2 = \numsteps{2}{1} \timejump +\tau_1 = \frac{N-2}{2d-1}\timejump +\tau_1,
\end{align}
where $\numsteps{2}{1}$ is given by \eqref{eq:n21}. To summarize:
\begin{itemize}
\item[] The $B$ molecule initially occupies a voxel in $d_1$, and the $A$ molecule is fixed in $d_0$.
\item[(1)] The $B$ molecule diffuses with probability $p_1/(p_1+\timejump )$.
\begin{itemize}
\item[(1.1)] The $B$ molecule jumps to $d_2$ with probability $(2d-1)/(2d)$. The average remaining time until the $A$ and $B$ molecules react is given by $\tau_2$.
\item[(1.2)] The $B$ molecule jumps to $d_0$ with probability $1/(2d)$.
\begin{itemize}
\item[(1.2.1)] The $A$ and $B$ molecules react with probability $\timejump /(p_0+\timejump )$.
\item[(1.2.2)] The $B$ molecule diffuses to $d_1$ with probability $p_0/(p_0+\timejump )$. The average remaining time until the $A$ and $B$ molecules react is given by $\tau_1$.
\end{itemize}
\end{itemize}
\item[(2)] The $A$ and $B$ molecules react with probability $\timejump /(p_1+\timejump )$.
\end{itemize}
Putting it all together, we obtain
\begin{align}
\label{eq:tau1eq_small}
\tau_1 = \frac{\timejump }{p_1+\timejump }t_e^1+\frac{p_1}{p_1+\timejump }\left( t_e^1+\frac{1}{2d}\tau_0+\frac{2d-1}{2d}\tau_2 \right).
\end{align}
By inserting \eqref{eq:tau1p_tau0} and \eqref{eq:tau1p_tau2} into \eqref{eq:tau1eq_small} and solving for $\tau_1$ we obtain \eqref{eq:tau1-th1}, after some cumbersome but straightforward algebra. By assuming $\numvoxels\gg 1$, \eqref{eq:tau1-th2} follows.
\end{proof}
\begin{theorem}
Let $\mesobindr$ be the average time until the molecules react, given that the $B$ molecules have a uniform initial distribution. Then
\begin{align}
\label{eq:tau_meso_theorem}
\mesobindr(\krme,h) \approx \begin{cases}
\left[\pi^{-1}N\log{N}+(C_2-1)N\right]\timejump +\frac{p_0+t_j}{p_0+4rt_j}\frac{N}{\krme}   \quad \mathrm{(2D)}\\
(C_3-1)N\timejump +\frac{p_0+t_j}{p_0+6rt_j}\frac{N}{\krme}\quad \mathrm{(3D)}.
\end{cases}
\end{align}
\end{theorem}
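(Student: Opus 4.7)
The plan is to simply assemble the theorem from the two preceding lemmas via the elementary decomposition already noted in equation \eqref{eq:tau_meso_simple}, namely
\begin{align*}
\mesobindr(\krme,\voxsize) = \step{1}\timejump + \tau_1.
\end{align*}
All substantive work has been done: the first lemma gives the leading asymptotics of $\step{1}$ in dimensions two and three, and Lemma \ref{lemma-tau1} gives the approximation $\tau_1 \approx \frac{p_0+\timejump}{p_0+2dr\timejump}\frac{N}{\krme}$ under the assumption $N \gg 1$.

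First I would write out $\step{1}\timejump$ using equation \eqref{eq:ns1}, dropping the $O(1)$ (2D) and $O(\sqrt{N})$ (3D) remainder terms, which is consistent with the same $N\gg 1$ regime in which $\tau_1$ was simplified. This yields $[\pi^{-1}N\log N + (C_2-1)N]\timejump$ in two dimensions and $(C_3-1)N\,\timejump$ in three dimensions. Next I would substitute $\tau_1$ from \eqref{eq:tau1-th2}, specializing $d=2$ so that $2dr = 4r$ in the 2D formula and $d=3$ so that $2dr = 6r$ in the 3D formula. Adding the two pieces produces precisely the stated expression \eqref{eq:tau_meso_theorem}.

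Since this is really just a bookkeeping step, there is no genuine obstacle beyond keeping track of the asymptotic regime. The mildest subtlety is to verify that the neglected $O(1)$ (2D) or $O(\sqrt{N})$ (3D) correction in $\step{1}\timejump$ is indeed subleading relative to the $N/\krme$ term coming from $\tau_1$; this holds as long as $\krme$ does not scale with $N$ in a pathological way, which is the standing assumption throughout the derivation. Note also that by construction \eqref{eq:total_intensity_constant}, $p_0 = 1/[(1-2dr)\krme]$ and $p_1 = 1/(r\krme)$ are consistent with the total intensity $\krme$ used on the right-hand side of \eqref{eq:tau_meso_theorem}, so no additional normalization is needed.
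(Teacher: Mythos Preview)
Your proposal is correct and matches the paper's own proof essentially verbatim: the paper simply states that the result ``follows immediately from \eqref{eq:tau_meso_simple}, \eqref{eq:ns1} and \eqref{eq:tau1-th2}.'' Your additional remarks about the subleading error terms and the consistency of $p_0,p_1$ with $\krme$ are fine but not needed for the argument as presented.
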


\begin{proof}
This follows immediately from \eqref{eq:tau_meso_simple}, \eqref{eq:ns1} and \eqref{eq:tau1-th2}.
\end{proof}

It is of interest to know the smallest voxel size $\voxsize$ for which we can match the mesoscopic mean binding time, $\mesobindr$, with the microscopic mean binding time, $\microbind$. In \cite{HHP,HHP2} this problem was solved in the case of the standard RDME for a general reversible reaction $A+B \xrightleftharpoons[\kdmi]{\krmi} C$. Similar results in the case of the generalized RDME can be obtained for the case of an irreversible reaction with $\krmi\to\infty$. As we will find in Theorem \ref{fund_bound_th}, the lower bound for $\krmi\to\infty$ is in fact a fundamental lower bound for the generalized RDME.
\begin{theorem}
Let $\hstarnl$ be the smallest voxel size for which we can choose reaction rates such that $\mesobindr(\krme,h)=\microbind(\krmi)$. Then
\begin{align}
\label{eq:hstarinfnl}
\hstarinfnl=\begin{cases}
\sqrt{\pi}\exp\left( \frac{3+2\pi(C_2-1)}{4} \right)\rradius\approx 1.0599\rradius \quad \mathrm{(2D)} \\
\frac{2}{3}(C_3-1)\pi\rradius\approx 1.0815\rradius \quad \mathrm{(3D)}.
\end{cases}
\end{align}
\end{theorem}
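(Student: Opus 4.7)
The plan is to reduce the problem to the analogous characterization of $\hstarinf$ already implicit in \eqref{eq:local-h-star-inf}. First I would argue that the smallest admissible $h$ is governed by the limit $\krmi \to \infty$: on the micro side, $\microbind(\krmi)$ is decreasing in $\krmi$, so $\microbind(\infty)$ is the smallest target; on the meso side, for any fixed $r \in (0, 1/(2d)]$, Lemma~\ref{lemma-tau1} shows that $\tau_1 = O(1/(r\krme))$ vanishes as $\krme \to \infty$, so that \eqref{eq:tau_meso_simple} gives
\begin{align*}
\lim_{\krme \to \infty} \mesobindr(\krme, h) \;=\; \step{1}\, t_j.
\end{align*}
Combined with monotonicity of $\mesobindr$ in $\krme$, this says the constraint $\mesobindr(\krme,h) = \microbind(\krmi)$ is solvable in $(r,\krme)$ if and only if $\step{1}\, t_j \leq \microbind(\krmi)$, with the hardest case being $\krmi \to \infty$. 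Thus $\hstarinfnl$ is the (unique) voxel size satisfying $\step{1}(\hstarinfnl)\, t_j(\hstarinfnl) = \microbind(\infty)$.

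Next I would compare with the standard RDME. The same argument applied in the $r=0$ branch (in which the mesoscopic reaction can only fire when $B$ occupies $d_0$) recovers the known characterization $\step{0}(\hstarinf)\, t_j(\hstarinf) = \microbind(\infty)$, which is precisely how \eqref{eq:local-h-star-inf} is derived. Subtracting the two conditions and using \eqref{eq:ns0} and \eqref{eq:ns1}, the leading-order difference is $(\step{0} - \step{1})\, t_j = N\, t_j$ in both dimensions. Consequently the generalized condition differs from the standard one by exactly the substitution $C_d \mapsto C_d - 1$, and performing this substitution in \eqref{eq:local-h-star-inf} directly yields \eqref{eq:hstarinfnl}. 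As a sanity check one can verify the numerics: in 2D, $\hstarinf \exp(-\pi/2) \approx 5.1\sigma \cdot 0.208 \approx 1.06\sigma$, and in 3D, $\hstarinf \cdot (C_3-1)/C_3 \approx 1.08\sigma$.

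The main obstacle is justifying that no exotic choice of $(r, \krme)$ admits a smaller $h$. This reduces to proving $\inf_{r,\krme}\mesobindr(\krme,h) = \step{1}(h)\, t_j(h)$, which follows from the $\krme$-monotonicity above together with the observation that the $r = 0$ branch yields the strictly worse bound $\hstarinf > \hstarinfnl$ (since $\step{0} > \step{1}$). A secondary technical concern is that the asymptotics in \eqref{eq:ns1} carry $O(1)$ in 2D and $O(\sqrt{N})$ in 3D remainders, but these are negligible in the relevant regime $h \ll L$ and do not degrade the leading-order precision already present in \eqref{eq:local-h-star-inf}.
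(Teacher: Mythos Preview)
Your proposal is correct and shares its core with the paper's proof: both reduce the question to the equation $\step{1}\,\timejump = \microbind(\infty)$, obtained by sending $\krme\to\infty$ with $r>0$ so that $\tau_1\to 0$ in \eqref{eq:tau_meso_simple}. Where the approaches diverge is in how that equation is solved. The paper writes out $\microbind(\infty)$ explicitly in each dimension (using the Collins--Kimball rate in 3D and the Fange--Berg--Sj\"oberg--Elf formula in 2D), inserts the asymptotics \eqref{eq:ns1} for $\step{1}$, and solves for $h$ directly. You instead invoke the already-established characterization $\step{0}\,\timejump = \microbind(\infty)$ for $\hstarinf$ and observe from \eqref{eq:ns0}--\eqref{eq:ns1} that passing from $\step{0}$ to $\step{1}$ amounts precisely to the replacement $C_d\mapsto C_d-1$ in the leading term, so that the same substitution in \eqref{eq:local-h-star-inf} yields \eqref{eq:hstarinfnl}. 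Your route is more economical and makes the structural relationship between $\hstarinf$ and $\hstarinfnl$ transparent (for instance, the ratios $\hstarinf/\hstarinfnl = \exp(\pi/2)$ in 2D and $C_3/(C_3-1)$ in 3D drop out immediately, facts the paper only recovers later in the proof of Theorem~6). The paper's route is more self-contained, since it does not presuppose that \eqref{eq:local-h-star-inf} was itself obtained from $\step{0}\,\timejump=\microbind(\infty)$. Your additional justification that the $\krmi\to\infty$ case is the binding constraint is also slightly more explicit than the paper's, which simply states this before the theorem and defers the full argument to Theorem~5.
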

\begin{proof}
We do not have analytical results for $\microbind$ on a square or a cube, but given that $L\gg\rradius$ is satisfied, an excellent approximation is provided by
\begin{align}
\microbind(\krmi) = \begin{cases}
\frac{1+\alpha F(\lambda)}{\krmi}L^2\quad\mathrm{(2D)}\\
\frac{L^3}{\kme}\quad\mathrm{(3D)},
\end{cases}
\end{align}
where
\begin{align}
\begin{split}
\lambda &= \pi^{\frac{1}{2}}\frac{\rradius}{L}\\
\alpha &= \frac{\krmi}{2\pi\diffconst} \\
F(\lambda) &= \frac{\log(1/\lambda)}{(1-\lambda^2)^2}-\frac{3-\lambda^2}{4(1-\lambda^2)},
\end{split}
\end{align}
and where $\kme=4\pi\rradius\diffconst\krmi/(4\pi\rradius\diffconst+\krmi)$ is the classical mesoscopic reaction rate, valid for large volumes, derived by Collins and Kimball in \cite{CollinsKimball}. The expression in 2D was derived in \cite{FBSE10}, following the approach devised in \cite{AgmonSzabo}.

Since molecules are allowed to react with molecules occupying neighboring voxels, we obtain
\begin{align}
\tau_1\to 0 \quad\mathrm{for}\quad \krmi\to\infty,\, r>0
\end{align}
and thus
\begin{align}
\mesobindr\to \step{1} \timejump \quad\mathrm{for}\quad\krmi\to\infty,\, r>0.
\end{align}
We know $\step{1} $ from \eqref{eq:ns1}, and we have $\timejump =h^2/(2d\diffconst)$ by definition. We now obtain \eqref{eq:hstarinfnl} by solving
\begin{align}
\label{eq:hstareq}
\step{1} \timejump  = \microbind(\infty)
\end{align}
for $\voxsize$.

In 3D, \eqref{eq:hstareq} becomes
\begin{align}
\label{eq:hstareq3D}
\frac{(C_3-1)L^3}{6Dh} = \frac{L^3}{4\pi\rradius\diffconst},
\end{align}
since $\step{1}  \sim (C_3-1)N$ for $N\gg 1$, and $\kme\to 4\pi\rradius\diffconst$ as $\krmi\to\infty$. Solving \eqref{eq:hstareq3D} for $\voxsize$ yields
\begin{align}
h=\frac{2}{3}(C_3-1)\pi\rradius\approx 1.0815\rradius.
\end{align}

In 2D, \eqref{eq:hstareq} becomes
\begin{align}
\label{eq:hstareq2D}
\frac{h^2}{4\diffconst}\left[ \pi^{-1}\frac{L^2}{h^2}\log\left( \frac{L^2}{h^2} \right) + (C_2-1)\frac{L^2}{h^2} \right] = \frac{L^2}{\krmi}+\frac{F(\lambda)}{2\pi\diffconst}L^2,
\end{align}
and for $\krmi\to\infty$, we have
\begin{align}
\label{eq:taumicro2Dlimit}
\frac{L^2}{\krmi}+\frac{F(\lambda)}{2\pi\diffconst}L^2\to\frac{\log\left( \pi^{-\frac{1}{2}}\frac{L}{\sigma} \right)-\frac{3}{4}}{2\pi\diffconst}L^2.
\end{align}
In \eqref{eq:taumicro2Dlimit} we used that $\lambda\approx 0$ for $L\gg\rradius$. Now \eqref{eq:hstareq2D} reduces to
\begin{align}
\pi^{-1}\log\left(\frac{L}{\voxsize}\right)+\frac{C_2-1}{2} = \pi^{-1}\log\left( \pi^{-\frac{1}{2}}\frac{L}{\sigma} \right) - \frac{3}{4\pi}.
\end{align}
We can rewrite the equation above to get
\begin{align}
\pi^{-1}\log\left( \pi^{\frac{1}{2}} \frac{\rradius}{\voxsize} \right) = \frac{1-C_2}{2}-\frac{3}{4\pi}.
\end{align}
Solving for $\voxsize$ yields
\begin{align}
h = \sqrt{\pi}\exp\left(\frac{3+2\pi(C_2-1)}{4}\right)\sigma\approx 1.0599\rradius
\end{align}
\end{proof}

\subsubsection{Mean mesoscopic rebinding time}

To satisfy the second constraint \eqref{eq:nonlocal-constraint2} we need both the microscopic and the mesoscopic mean rebinding times. The microscopic rebinding time is derived in \cite{HHP2}, as
\begin{align}
\label{eq:microrebind}
\microrebind = \frac{L^d}{k_r}.
\end{align}
The mesoscopic rebinding time is simply given by 
\begin{align}
\label{eq:mesorebind1}
\mesorebind = \tau_0,
\end{align}
as $\tau_0$ by definition is the time until an $A$ and a $B$ molecule react, given that they start in the same voxel. We have already derived $\tau_0$ in terms of $\tau_1$ in \eqref{eq:tau1p_tau0}.
\begin{theorem}
Let $\mesorebind$ be the average rebinding time of an $A$ and a $B$ molecule. Then
\begin{align}
\label{eq:mesorebind_theorem}
\mesorebindr \approx t_e^0+\frac{p_0}{p_0+2dr\timejump }\frac{N}{\krme}.
\end{align}
\end{theorem}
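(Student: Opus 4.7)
The plan is essentially a direct substitution argument, since all the necessary pieces have already been assembled in the preceding lemmas. First, I would invoke the definition \eqref{eq:mesorebind1}, which gives $\mesorebindr = \tau_0$, so the task reduces to producing a usable closed-form expression for $\tau_0$.

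Next, I would recall the recurrence \eqref{eq:tau1p_tau0} from the proof of Lemma \ref{lemma-tau1}, namely
\begin{align*}
\tau_0 = t_e^0 + \frac{p_0}{p_0+\timejump}\,\tau_1,
\end{align*}
which was obtained by a one-step case analysis on what happens when the $B$ molecule sits in the same voxel as the (fixed) $A$ molecule: either the two react (time $t_e^0$) or $B$ jumps to $d_1$ first (then an expected additional time $\tau_1$ is incurred). Into this recurrence I would insert the large-$N$ approximation \eqref{eq:tau1-th2} for $\tau_1$, i.e.\ $\tau_1 \approx \frac{p_0+\timejump}{p_0+2dr\timejump}\frac{N}{\krme}$.

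The factor $p_0+\timejump$ in the denominator of the coefficient of $\tau_1$ then cancels exactly against the $p_0+\timejump$ in the numerator of the approximation for $\tau_1$, leaving
\begin{align*}
\mesorebindr \approx t_e^0 + \frac{p_0}{p_0+2dr\timejump}\frac{N}{\krme},
\end{align*}
which is the claimed expression. I do not anticipate any real obstacle: the only step that is not purely algebraic is the use of the $N\gg 1$ approximation from Lemma \ref{lemma-tau1}, and that approximation is already explicit in \eqref{eq:tau1-th2}. The whole argument is two substitutions and one cancellation, and the $\approx$ in the statement is inherited directly from the $\approx$ in \eqref{eq:tau1-th2}.
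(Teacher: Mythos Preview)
Your proposal is correct and follows exactly the same route as the paper: the paper's proof simply states that the result follows immediately from Lemma~\ref{lemma-tau1} and \eqref{eq:tau1p_tau0}, which is precisely your substitution of \eqref{eq:tau1-th2} into \eqref{eq:tau1p_tau0} together with $\mesorebindr=\tau_0$ from \eqref{eq:mesorebind1}. The cancellation of the $p_0+\timejump$ factor that you spell out is the only algebra hidden behind the word ``immediately.''
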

\begin{proof}
This follows immediately from Lemma~\ref{lemma-tau1} and \eqref{eq:tau1p_tau0}.
\end{proof}

\subsubsection{Solving for $r$ and $\krme$}

We now want $r$ and $\krme$ to satisfy the constraints \eqref{eq:nonlocal-constraint1} and \eqref{eq:nonlocal-constraint2}. It will prove useful to divide the problem into two cases:
\begin{align}
&\text{Case 1: } h\geq\hstarinf\\
&\text{Case 2: } \hstarinf>h\geq \hstarinfnl.
\end{align}
It turns out that in case 1 we get $r=0$, effectively reducing the generalized algorithm to the standard algorithm.
\begin{theorem}
Assume that $r$ and $\krme$ have been chosen to satisfy the first constraint \eqref{eq:nonlocal-constraint1}. Then, for $h\geq\hstarinf$, we have
\begin{align}
\mesorebindr(\krme,h)\gtrapprox\mesorebindlocal(\krmi,h)
\end{align}
\end{theorem}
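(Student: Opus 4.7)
The plan is to compress the dependence of $\mesorebindr$ on the pair $(r,\krme)$ into a single variable by imposing the binding-time constraint, after which monotonicity finishes the job. Introduce $p_0 := 1/[(1-2dr)\krme]$ and the $r$-independent quantity $T := \microbind(\krmi) - \step{1}\timejump$. Using $t_e^0 = p_0\timejump/(p_0+\timejump)$ in the rebinding formula of the preceding theorem and eliminating $N/\krme$ via \eqref{eq:nonlocal-constraint1}, which forces $N/\krme = T\,(p_0+2dr\,\timejump)/(p_0+\timejump)$, one arrives at
\begin{equation*}
\mesorebindr \;=\; \frac{p_0\,(\timejump+T)}{p_0+\timejump}.
\end{equation*}
This is strictly increasing in $p_0$, so it is enough to prove that the value $p_0$ selected by the constraint satisfies $p_0 \geq p_0^{\mathrm{loc}}$, where $p_0^{\mathrm{loc}} := 1/\localrated(\krmi,h)$ is the value produced at $r=0$.

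For $r=0$ the constraint collapses to $N(p_0^{\mathrm{loc}}+\timejump) = T$, and the hypothesis $h \geq \hstarinf$ guarantees $p_0^{\mathrm{loc}}>0$ by the discussion surrounding \eqref{eq:rebind-loc-error}. For $r \in (0,\,1/(2d))$, the same substitution turns the constraint into the quadratic
\begin{equation*}
Q_r(p_0) \;:=\; (1-2dr)\,p_0(p_0+\timejump) \;-\; (p_0^{\mathrm{loc}}+\timejump)\,(p_0+2dr\,\timejump) \;=\; 0,
\end{equation*}
and a direct plug-in yields the clean identity
\begin{equation*}
Q_r(p_0^{\mathrm{loc}}) \;=\; -\,2dr\,(p_0^{\mathrm{loc}}+\timejump)^2 \;<\; 0 \qquad (r>0).
\end{equation*}

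Since $Q_r$ has positive leading coefficient $(1-2dr)$ and strictly negative constant term $-2dr\,\timejump\,(p_0^{\mathrm{loc}}+\timejump)$, its two real roots have opposite sign; the physically admissible positive root is therefore strictly larger than $p_0^{\mathrm{loc}}$. Combined with the monotonicity of the reduced formula for $\mesorebindr$ in $p_0$, this yields the claimed inequality. The main obstacle is the sign analysis of $Q_r$: one must verify that the positive root is really the branch singled out by $p_0>0$ (equivalently $\krme>0$ together with $r<1/(2d)$), and that the hypothesis $h \geq \hstarinf$ is exactly the assumption needed to keep $p_0^{\mathrm{loc}}$ strictly positive so that the comparison is well-posed. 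The approximate sign "$\gtrapprox$" in the statement absorbs the large-$N$ truncations ($O(1)$ in 2D and $O(\sqrt N)$ in 3D) inherited from Lemma 1 and the preceding theorem; modulo these, the inequality is strict for $r>0$ and an equality at $r=0$, so the generalized algorithm collapses to the standard one in this regime.
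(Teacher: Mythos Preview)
Your argument is correct and takes a genuinely different route from the paper's. The paper first uses the identity $\step{0}-\step{1}=N$ together with the constraint \eqref{eq:nonlocal-constraint1} (applied to both the standard and the generalized RDME) to write $\mesorebindlocal=\taunl{1}-N\timejump$, so that the desired inequality becomes $\taunl{1}-\taunl{0}\leq N\timejump$; it then plugs in the explicit formulas for $\taunl{0}$ and $\taunl{1}$ and reduces this, through a short chain of equivalences, to the trivially true inequality $(1-2dr)^{-1}+2dr\,\timejump\,\krmenl\geq 1$. No intermediate variable like $p_0$ is isolated and no monotonicity argument is made.

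By contrast, you collapse the two free parameters to one by eliminating $N/\krme$ via the constraint, obtaining the closed form $\mesorebindr=p_0(\timejump+T)/(p_0+\timejump)$, and then compare the admissible $p_0$ to $p_0^{\mathrm{loc}}$ through the sign of the quadratic $Q_r$. This buys a cleaner structural picture: once \eqref{eq:nonlocal-constraint1} is imposed, the rebinding time depends on the pair $(r,\krme)$ only through $p_0$, and the constraint pushes $p_0$ monotonically above $p_0^{\mathrm{loc}}$ as soon as $r>0$. That makes the equality case at $r=0$ and the strict inequality for $r>0$ transparent, and would generalize more readily if one wanted to quantify the gap $\mesorebindr-\mesorebindlocal$. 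The paper's direct algebraic reduction is shorter and avoids the separate root-location step, but gives less insight into why the inequality holds. Your use of $h\geq\hstarinf$ to guarantee $p_0^{\mathrm{loc}}>0$ (equivalently $T>N\timejump$) is exactly the right place where the hypothesis enters, matching the paper's reliance on $\mesobindlocal(\infty,h)\leq\microbind(\infty)$ in that regime.
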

\noindent Since $\mesorebindlocal(\krmi,h) \geq\microrebind(\krmi)$, it immediately follows that for $h\geq\hstarinf$, the generalized RDME and the standard RDME agree.
\begin{proof}
We already know that
\begin{align}
\label{eq:binding-times}
&\mesobindloc = \step{0} \timejump +\tauloc{0}\\
&\mesobindnl = \step{1} \timejump +\taunl{1}
\end{align}
where $\tau_i$, as previously defined, is the average time until the molecules react, given that the $B$ molecule is in $d_i$. The superscript $g$ indicates that it is the average time in the case of the generalized RDME, and omission of the superscript indicates that it is the average time in the case of the standard RDME. 

We have assumed that \eqref{eq:nonlocal-constraint1} is satisfied and consequently
\begin{align}
\label{eq:hgh-eq1}
0 = \mesobindloc-\mesobindnl = (\step{0} -\step{1} )\timejump +(\tauloc{0}-\taunl{1}) = N\timejump +(\tauloc{0}-\taunl{1}),
\end{align}
where the second equality follows from \eqref{eq:ns1} and \eqref{eq:ns0}. We know that $\mesorebindloc=\tauloc{0}$, so we get
\begin{align}
\mesorebindloc = \taunl{1}-N\timejump .
\end{align}
Thus
\begin{align}
&\mesorebindnl\geq\mesorebindloc\label{eq:ineq1}\\
 \iff &\taunl{0}\geq \taunl{1}-N\timejump  \\
 \iff &\taunl{1}-\taunl{0}\leq N\timejump .\label{eq:hgh-eq2}
\end{align}
We have already shown that
\begin{align}
\label{eq:taus}
&\taunl{0} = t_e^0+\frac{p_0}{p_0+\timejump }\taunl{1}>\frac{p_0}{p_0+\timejump }\taunl{1}\\
&\taunl{1} \approx \frac{p_0+\timejump }{p_0+2dr\timejump }\frac{N}{\krmenl}.
\end{align}
Now \eqref{eq:hgh-eq2} becomes
\begin{align}
\frac{p_0+\timejump }{p_0+2dr\timejump }\frac{N}{\krmenl}-\frac{p_0}{p_0+\timejump }\frac{p_0+\timejump }{p_0+2dr\timejump }\frac{N}{\krmenl}\leq N\timejump \\
\iff \left(\frac{p_0+\timejump }{p_0+2dr\timejump }-\frac{p_0}{p_0+2dr\timejump }\right)\frac{N}{\krmenl} \leq N\timejump \\
\iff \frac{\timejump }{p_0+2dr\timejump }\frac{N}{\krmenl}\leq N\timejump \\
\iff \frac{1}{p_0+2dr\timejump }\frac{1}{\krmenl}\leq 1.\label{eq:hgh-eq3}
\end{align}
By definition, $p_0 = 1/(1-2dr)\krmenl$, so \eqref{eq:hgh-eq3} becomes
\begin{align}
\frac{1}{\frac{1}{(1-2dr)\krmenl}+2dr\timejump }\frac{1}{\krmenl}\leq 1\\
\iff \frac{1}{(1-2dr)^{-1}+2dr\timejump \krmenl}\leq 1\\
\iff 1\leq (1-2dr)^{-1}+2dr\timejump \krmenl.\label{eq:hgh-eq4}
\end{align}
Since $1-2dr\leq1$, we have $(1-2dr)^{-1}\geq 1$, and $2dr\timejump \krmenl\geq 0$ so \eqref{eq:hgh-eq4} is satisfied for all $r$ and $\krmenl$. Thus \eqref{eq:ineq1} holds for all $r$ and $\krmenl$.
\end{proof}
What remains is to determine $r$ and $\krmenl$ for $h<\hstarinf$.
\begin{theorem}
\label{fund_bound_th}
Assume that $h=\hstarinfnl$ and that $\tau_1\gg t_e^0$. Then 
\begin{align}
&\mesobindnl(\krme,h) \approx \microbind(\krmi)\label{eq:smhs-0}\\
&\mesorebindnl(\krme,h) \approx \microrebind(\krmi),\label{eq:smhs-1}
\end{align}
for $\krme = \krmi/\voxsize^d$ and $1-2dr = 0$.

For $h<\hstarinfnl$,
\begin{align}
\label{eq:smhs-2}
\mesorebindnl(\krme,h)\lessapprox\microrebind(\krmi).
\end{align}
\end{theorem}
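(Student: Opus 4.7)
The plan is to handle the three claims of the theorem in turn. The algebraic backbone is the identity $\microbind(\krmi) = \microbind(\infty) + L^d/\krmi$, which in 3D follows from $1/\kme = 1/\krmi + 1/(4\pi\rradius\diffconst)$, and in 2D from $\microbind(\krmi)=[1+\alpha F(\lambda)]L^2/\krmi$ by isolating the $\krmi$-independent piece $F(\lambda)L^2/(2\pi\diffconst) = \microbind(\infty)$. By \eqref{eq:microrebind} the difference $L^d/\krmi$ is precisely $\microrebind$, so this identity is the bridge between binding and rebinding on both scales.

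For the first two claims, the hypothesis $1-2dr=0$ sends $p_0 = 1/[(1-2dr)\krme]\to\infty$, which collapses \eqref{eq:tau1-th2} to $\tau_1 \approx N/\krme = L^d/\krmi$ using $Nh^d=L^d$ and $\krme=\krmi/\voxsize^d$. The defining equation of $\hstarinfnl$ from the previous theorem is exactly $\step{1}\timejump = \microbind(\infty)$ at $h=\hstarinfnl$, so the decomposition \eqref{eq:tau_meso_simple} gives $\mesobindnl = \step{1}\timejump + \tau_1 = \microbind(\infty) + L^d/\krmi = \microbind(\krmi)$, establishing \eqref{eq:smhs-0}. For \eqref{eq:smhs-1}, letting $p_0\to\infty$ in \eqref{eq:tau1p_tau0} yields $\tau_0 \to t_e^0 + \tau_1$, and the hypothesis $\tau_1\gg t_e^0$ drops the $t_e^0$ term so that $\mesorebindnl = \tau_0 \approx \tau_1 = L^d/\krmi = \microrebind$.

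For \eqref{eq:smhs-2}, I would keep constraint \eqref{eq:nonlocal-constraint1} in force for $h<\hstarinfnl$, which forces $\tau_1 = \microbind(\krmi) - \step{1}\timejump$ irrespective of how $r$ and $\krme$ are apportioned. The crucial monotonicity step is that $\step{1}\timejump$ is strictly decreasing in $\voxsize$: in 3D from $\step{1}\timejump \approx (C_3-1)L^3/(6\diffconst\voxsize)$, and in 2D from $\step{1}\timejump \approx [\pi^{-1}\log N + (C_2-1)]L^2/(4\diffconst)$ with $N=L^2/\voxsize^2$. Combined with $\step{1}\timejump\big|_{h=\hstarinfnl} = \microbind(\infty)$, this forces $\step{1}\timejump > \microbind(\infty)$ whenever $h<\hstarinfnl$, so $\tau_1 < L^d/\krmi$. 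Since $\tau_0 = t_e^0 + [p_0/(p_0+\timejump)]\tau_1 \leq t_e^0 + \tau_1$, reapplying the large-$\tau_1$ regime gives $\mesorebindnl \approx \tau_1 \lessapprox \microrebind$.

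The only delicate point I anticipate is checking that constraint \eqref{eq:nonlocal-constraint1} remains feasible throughout the range $h<\hstarinfnl$ in the sense that $\tau_1$ in \eqref{eq:tau1-th2} can actually be tuned to the required positive value by varying $r\in[0,1/(2d)]$ and $\krme>0$. This reduces to a continuity argument on the one-parameter family of admissible rates, and it also explains why $\hstarinfnl$ is a genuine fundamental lower bound: once $h$ shrinks enough that $\step{1}\timejump$ exceeds $\microbind(\krmi)$, even the binding constraint becomes infeasible and no choice of $(r,\krme)$ can recover it.
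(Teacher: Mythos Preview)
Your proof is correct and follows essentially the same route as the paper: the decomposition $\mesobindnl = \step{1}\timejump + \tau_1$ together with $\step{1}\timejump = \microbind(\infty)$ at $h=\hstarinfnl$, the collapse $\tau_1 \to N/\krme = L^d/\krmi$ when $p_0\to\infty$, the approximation $\tau_0\approx\tau_1$ under $\tau_1\gg t_e^0$, and for \eqref{eq:smhs-2} the paper likewise keeps \eqref{eq:nonlocal-constraint1} in force and uses $\step{1}\timejump > \microbind(\infty)$ for $h<\hstarinfnl$ to conclude $\tau_1<\microrebind$. Your explicit monotonicity argument for $\step{1}\timejump$ and the closing remark on feasibility of \eqref{eq:nonlocal-constraint1} are mild elaborations beyond the paper's terser version, but the core argument is identical.
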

Note that we have already shown that we can satisfy \eqref{eq:nonlocal-constraint1} at least down to $h=\hstarinfnl$. The assumption $\tau_1\gg t_e^0$ means, in words, that the \emph{average} time until two molecules react, given that they are one voxel apart, is much longer than the average time until the first event, given that they occupy the same voxel. Unless the microscopic reaction rate is very high, this should be a reasonable assumption for most systems. The necessity of this assumption is realized by considering two molecules in the same voxel. Now, if the average microscopic rebinding time is smaller than the average time until the first diffusion event on the mesoscopic scale, we could not hope to find mesoscopic rates that will yield a match between the mesoscopic rebinding time and the microscopic rebinding time. 
\begin{proof}
We already know that
\begin{align}
\mesobindnl(\krme,h) = \step{1}\timejump+\tau_1,
\end{align}
and from assuming $\voxsize = \hstarinfnl$, it follows that
\begin{align}
\step{1}\timejump = \microbind(\infty),
\end{align}
and thus
\begin{align}
\mesobindnl(\krme,\hstarinfnl) = \microbind(\infty)+\tau_1.
\end{align}
Lemma 2 yields, for $1-2dr=0$ and $\krme = \krmi/\voxsize^d$,
\begin{align}
\tau_1 = \frac{N}{\krme} = \frac{N\voxsize^d}{\krmi} = \frac{L^d}{\krmi} = \microrebind(\krmi).
\end{align}
We now have
\begin{align}
\mesobindnl\left(\frac{\krmi}{\voxsize^d},\hstarinfnl\right) = \microbind(\infty)+\microrebind(\krmi) = \microbind(\krmi),
\end{align}
and thus \eqref{eq:smhs-0} holds. Since we have assumed $\tau_1\gg t_e^0$ and $1-2dr=0$, we get
\begin{align}
\mesorebindnl\left(\frac{\krmi}{\voxsize^d},\hstarinfnl\right) = \tau_0\approx \tau_1 = \microrebind(\krmi),
\end{align}
and we have shown that \eqref{eq:smhs-1} holds.

It remains to show \eqref{eq:smhs-2}. To this end, we simply note that
\begin{align}
\microbind(\krmi) = \mesobindnl = \step{1} \timejump +\tau_1>\microbind(\infty)+\tau_1,
\end{align}
since $\step{1} \timejump >\microbind(\infty)$ for $h<\hstarinfnl$. Thus
\begin{align}
\microrebind = \microbind(\krmi)-\microbind(\infty) >\tau_1 \approx \tau_0 = \mesorebindnl.
\end{align}
which proves \eqref{eq:smhs-2}.
\end{proof}

We now show that for $\hstarinfnl<h<\hstarinf$ we can match both the mean binding time and the mean rebinding time.

\begin{theorem}
Assume that $\hstarinfnl<h<\hstarinf$ and that $\tau_1\gg t_e^0$. Then we have
\begin{align}
&\mesobindr(\krme,h) \approx \microbind(\krmi)\label{eq:ap-s1}\\
&\mesorebindr(\krme,h) \approx \microrebind(\krmi),\label{eq:ap-s2}
\end{align}
for
\begin{align}
&\krmenl = \left(\frac{\timejump Q^2+\krmi/\voxsize^d}{\timejump Q^2+Q\krmi/\voxsize^d}\right)\frac{\krmi}{h^d}\label{eq:ap-params1}\\
&r = \frac{DQ(Q-1)}{2dDQ^2+\krmi/h^{d-2}},\label{eq:ap-params2}
\end{align}
where
\begin{align}
\label{eq:ap-Q}
Q = \frac{N\timejump }{\microbind(\infty)-\mesobindnl(\infty)} = \begin{cases}
\left[ \frac{2}{\pi}\log\left(\frac{h}{\hstarinfnl}\right)\right]^{-1}\quad (2D)\\
\left[ (C_3-1)\left(\frac{h}{\hstarinfnl}-1\right)\right]^{-1}\quad (3D).
\end{cases}
\end{align}
\end{theorem}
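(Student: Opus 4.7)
The plan is to combine the two constraints \eqref{eq:nonlocal-constraint1}--\eqref{eq:nonlocal-constraint2} with the formula for $\tau_1$ from Lemma \ref{lemma-tau1} and the formula for $\mesorebindr$ in \eqref{eq:mesorebind_theorem}, invoke the assumption $\tau_1\gg t_e^0$ to drop $t_e^0$, and then extract $\krmenl$ and $r$ from the resulting two-equation system. The parameter $Q$ is engineered precisely to absorb the defect $\microbind(\infty)-\mesobindnl(\infty)$ that measures how much room remains above the mesh-size lower bound $\hstarinfnl$.

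First I would write the rebinding constraint, after dropping $t_e^0$, as $\frac{p_0}{p_0+2dr\timejump}\frac{N}{\krmenl}=\microrebind$, and expand the binding constraint using $\mesobindr=\step{1}\timejump+\tau_1$, $\mesobindnl(\infty)=\step{1}\timejump$, and $\microbind(\krmi)=\microbind(\infty)+\microrebind$, which together give $\frac{p_0+\timejump}{p_0+2dr\timejump}\frac{N}{\krmenl}=N\timejump/Q+\microrebind$ by the definition of $Q$. Subtracting the rebinding equation from the binding equation eliminates $\microrebind$ and collapses to $\krmenl(p_0+2dr\timejump)=Q$; re-inserting this into the rebinding equation then yields the clean identity $p_0=Qh^d/\krmi$. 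Because $p_0=1/[(1-2dr)\krmenl]$ by definition, this is the first relation $(1-2dr)\krmenl=\krmi/(Qh^d)$, and $\krmenl(p_0+2dr\timejump)=Q$ is the second.

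Eliminating $(1-2dr)$ from the first relation and substituting into the second produces $r$ in closed form; back-substitution then yields $\krmenl$, and rewriting $\timejump=h^2/(2d\diffconst)$ should reproduce \eqref{eq:ap-params1}--\eqref{eq:ap-params2}. The dimension-specific forms \eqref{eq:ap-Q} follow by substituting the asymptotic expressions for $\step{1}\timejump$ from Lemma 1 and the Collins--Kimball form of $\microbind(\infty)$ (already used in the derivation of $\hstarinfnl$) into $Q=N\timejump/[\microbind(\infty)-\mesobindnl(\infty)]$, and then re-expressing the denominator in terms of $\hstarinfnl$. The conclusions \eqref{eq:ap-s1}--\eqref{eq:ap-s2} then hold by construction, since $\krmenl$ and $r$ were chosen specifically to enforce both constraints. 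The main obstacle will be the algebraic bookkeeping, since $r$ and $\krmenl$ are coupled through $p_0$; one should also check that the resulting $r$ lies in $[0,1/(2d)]$, which reduces to verifying $Q>1$ throughout $\hstarinfnl<h<\hstarinf$ — an easy consequence of the ordering of $\mesobindnl(\infty)$, $\mesobind(\infty)$, and $\microbind(\infty)$ at the two endpoints of the range.
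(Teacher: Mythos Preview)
Your proposal is correct and follows essentially the same route as the paper: set up the binding and rebinding constraints using Lemma~\ref{lemma-tau1} and \eqref{eq:mesorebind_theorem}, drop $t_e^0$, subtract to isolate a single relation, solve the resulting two-equation system for $r$ and $\krmenl$, and then verify $Q>1$ (hence $0<r<1/(2d)$ and $\krmenl>0$) on $\hstarinfnl<h<\hstarinf$. Your intermediate identities $\krmenl(p_0+2dr\timejump)=Q$ and $p_0=Qh^d/\krmi$ give a slightly cleaner path than the paper's (which first expresses $\krmenl$ in terms of $r$ and then expands a quadratic in $\krmenl$), but the logic and endpoint checks are the same.
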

\begin{proof}
We already know that
\begin{align}
&\mesobindnl(\krme,\voxsize) = \step{1} \timejump +\tau_1\label{eq:ap-p1}\\
&\microrebind(\krmi) = \microbind(\krmi)-\microbind(\infty) = \frac{Nh^d}{\krmi}\label{eq:ap-p2}\\
&\mesorebindnl(\krme,\voxsize) = \tau_0 \approx \frac{p_0}{p_0+2dr\timejump }\frac{N}{\krmenl}\label{eq:ap-p3}.
\end{align}
Consequently we satisfy \eqref{eq:ap-s1} if
\begin{align}
\step{1} \timejump +\tau_1 = \microbind(\krmi),
\end{align}
which, by \eqref{eq:tau1-th1} and \eqref{eq:tau1-th2}, approximately holds if
\begin{align}
\label{eq:ap-p4}
\frac{p_0+\timejump }{p_0+2dr\timejump }\frac{N}{\krmenl} = \microbind(\krmi)-\step{1} \timejump .
\end{align}
To satisfy \eqref{eq:ap-s2}, we must, by \eqref{eq:ap-p2} and \eqref{eq:ap-p3}, satisfy
\begin{align}
\label{eq:ap-p5}
\frac{p_0}{p_0+2dr\timejump }\frac{N}{\krmenl} = \microrebind(\krmi).
\end{align}
Subtracting both the right-hand and left-hand side of \eqref{eq:ap-p5} from \eqref{eq:ap-p4},  we obtain
\begin{align}
\label{eq:ap-p6}
\frac{\timejump }{p_0+2dr\timejump }\frac{N}{\krmenl} = \microbind(\krmi)-\step{1} \timejump -\microrebind(\krmi).
\end{align} 
By definition, $p_0=1/(1-2dr)\krmenl$ and $\timejump=\voxsize^2/(2dD)$, so \eqref{eq:ap-p6} yields, after some straightforward algebra,
\begin{align}
\label{eq:ap-p7}
\krmenl = \left( \frac{(1-2dr)N\timejump }{\microbind(\krmi)-\step{1} \timejump -\microrebind(\krmi)}-1 \right)\frac{D}{rh^2(1-2dr)}.
\end{align}
Since $\mesobindnl(\infty) = \step{1} \timejump $ and $\microrebind(\krmi) = \microbind(\krmi)-\microbind(\infty)$, \eqref{eq:ap-p7} becomes
\begin{align}
\krmenl &= \frac{D}{rh^2}\left(\frac{N\timejump }{\microbind(\infty)-\mesobindnl(\infty)}-\frac{1}{1-2dr}\right)\label{eq:ap-p8-1}\\
&=\frac{D}{rh^2}\left( Q-\frac{1}{1-2dr}\right).\label{eq:ap-p8-2}
\end{align}
With $\krmenl$ as in \eqref{eq:ap-p8-2}, we want to find $r$ such that \eqref{eq:ap-p5} is satisfied. Since $\microrebind=L^d/\krmi=Nh^d/\krmi$, we obtain
\begin{align}
&\frac{p_0}{p_0+2dr\timejump }\frac{N}{\krmenl} = \frac{Nh^d}{\krmi}\\
\iff &\frac{1}{1_2dr\timejump p_0^{-1}}\frac{1}{\krmenl} = \frac{h^d}{\krmi}\\
\iff &(1+2dr\timejump p_0^{-1})\krmenl = \frac{\krmi}{h^d}.\label{eq:ap-p9}
\end{align}
Since $\timejump  = h^2/2dD$ and $p_0=1/(1-2dr)\krmi$, \eqref{eq:ap-p9} becomes
\begin{align}
\label{eq:ap-p10}
\frac{rh^2}{D}(1-2dr)(\krmenl)^2+\krmenl = \frac{\krmi}{h^d}.
\end{align}
We expand the first term of the left-hand side to get
\begin{align}
\frac{rh^2}{D}(1-2dr)(\krmenl)^2 = \frac{D}{rh^2}\left[ (1-2dr)Q^2-2Q+\frac{1}{1-2dr} \right].
\end{align}
Thus
\begin{align}
\frac{rh^2}{D}(1-2dr)(\krmenl)^2+\krmenl = \frac{D}{rh^2}\left[ (1-2dr)Q^2-Q\right],
\end{align}
and \eqref{eq:ap-p10} becomes
\begin{align}
\frac{D}{rh^2}\left[ (1-2dr)Q^2-Q\right] = \frac{\krmi}{h^d},
\end{align}
yielding
\begin{align}
r = \frac{DQ(Q-1)}{2dDQ^2+\frac{\krmi}{h^d}}.
\end{align}
Inserting $r$ above into \eqref{eq:ap-p8-2} yields \eqref{eq:ap-params1}. 

It remains to show that $\krmenl>0$ and $0<r<1/(2d)$ hold for $\krmenl$ and $r$ given by \eqref{eq:ap-params1} and \eqref{eq:ap-params2}. We first show that $Q>1$, from which $r>0$ follows. Thus we should show that
\begin{align}
\label{eq:ap-p11}
Q=\frac{N\timejump }{\microbind(\infty)-\mesobindnl(\infty)}>1
\end{align}
holds. We start by showing that \eqref{eq:ap-p11} holds in 3D. By \eqref{eq:ns1},
\begin{align}
\mesobindnl(\infty,h) \approx (C_3-1)N\timejump  = (C_3-1)\frac{L^3}{h^3}\frac{h^2}{2dD},
\end{align}
for $N\gg 1$. We have already shown that
\begin{align}
\microbind(\infty) = \mesobindr(\infty,\hstarinfnl) = (C_3-1)\frac{L^3}{(\hstarinfnl)^3}\timejump ,
\end{align}
so \eqref{eq:ap-p11} becomes
\begin{align}
Q=&\frac{\frac{L^3}{h^3}\frac{h^2}{2dD}}{(C_3-1)\frac{L^3}{(\hstarinfnl)^3}\frac{(\hstarinfnl)^2}{2dD}-(C_3-1)\frac{L^3}{h^3}\frac{h^2}{2dD}}
= \frac{\frac{1}{h}}{(C_3-1)\left(\frac{1}{\hstarinfnl}-\frac{1}{h}\right)}>1\\
\iff &(C_3-1)\left( \frac{h}{\hstarinfnl}-1\right)<1.
\end{align}
Since $\hstarinf/\hstarinfnl = C_3/(C_3-1)$, and, by assumption, $h<\hstarinf$, we obtain
\begin{align}
(C_3-1)\left(\frac{h}{\hstarinfnl}-1\right)<(C_3-1)\left(\frac{C_3}{C_3-1}-1\right)=1.
\end{align}
Thus $Q>1$, and, as a consequence, $r>0$. In 2D we have
\begin{align}
\mesobindnl(\infty,h) &= \left[\pi^{-1}N\log N+(C_2-1)N\right]\timejump \\
 &= \left[\pi^{-1}\frac{L^2}{h^2}\log \frac{L^2}{h^2}+(C_2-1)\frac{L^2}{h^2}\right]\frac{h^2}{2dD} \\
 &= \pi^{-1}\frac{L^2}{2dD}\log\frac{L^2}{h^2}+(C_2-1)\frac{L^2}{2dD}.
\end{align}
In 2D, similarly as in the 3D case, we have $\microbind(\infty) = \mesobindnl(\infty,\hstarinfnl)$. Thus
\begin{align}
Q &= \frac{\frac{L^2}{h^2}\frac{h^2}{2dD}}{\left[ \pi^{-1}\frac{L^2}{2dD}\log\frac{L^2}{(\hstarinfnl)^2}+(C_2-1)\frac{L^2}{2dD}\right]-\left[\pi^{-1}\frac{L^2}{2dD}\log\frac{L^2}{h^2}+(C_2-1)\frac{L^2}{2dD}\right]} \\
&= \frac{1}{2\pi^{-1}\left(\log\frac{L}{\hstarinfnl}-\log\frac{L}{h}\right)} = \frac{1}{2\pi^{-1}\log\frac{h}{\hstarinfnl}}.
\end{align}
Since, by assumption,
\begin{align}
1<\frac{h}{\hstarinfnl}<\frac{\hstarinf}{\hstarinfnl},
\end{align}
and
\begin{align}
\frac{\hstarinf}{\hstarinfnl} = \exp\left[\frac{3+2\pi C_2}{4}-\frac{3+2\pi(C_2-1)}{4}\right] = \exp\left(\frac{\pi}{2}\right),
\end{align}
we obtain
\begin{align}
Q = \frac{1}{2\pi^{-1}\log\frac{h}{\hstarinfnl}}>\frac{1}{2\pi^{-1}\log\left(\exp\frac{\pi}{2}\right)} = 1.
\end{align}
Thus $Q>1$ holds in both 2D and 3D, and we have $r>0$. Note that with $Q>1$, $\krmenl>0$ follows immediately. It remains to show $r<1/(2d)$. To this end, we simply note that
\begin{align}
r = \frac{DQ^2-DQ}{2dDQ^2+\frac{\krmi}{h^{d-2}}} = \left(\frac{DQ^2-DQ}{DQ^2+\frac{\krmi}{2dh^{d-2}}}\right)\frac{1}{2d},
\end{align}
where
\begin{align}
\frac{DQ^2-DQ}{DQ^2+\frac{\krmi}{2dh^{d-2}}}<1
\end{align}
holds, since $Q>1$.

Thus $0<r<1/(2d)$ and $\krmenl>0$ for $\hstarinfnl<h<\hstarinf$.
\end{proof}

\subsection{Dissociation rates}
\label{sec:dissociation}

Consider the same setup as before, with one $A$ molecule and one $B$ molecule reacting reversibly according to $\revreaction{A}{B}{\krmi}{\kdmi}{C}$. Above we have determined how to choose the mesoscopic association rates, so what remains is to determine the dissociation rate. This can be done completely analogously to the case of the standard RDME. We thus follow the approach of \cite{HHP2}, and conclude that we must have
\begin{align}
\frac{(\kdme)^{-1}}{\mesorebindr+(\kdme)^{-1}} = \frac{\kdmi^{-1}}{\microrebind+\kdmi^{-1}},
\end{align}
to obtain a steady state on the mesoscopic scale that matches the steady state of the microscopic scale. Thus it follows immediately that for $\hstarinfnl \leq h \leq \hstarinf$, we should have
\begin{align}
\kdme = \kdmi,
\end{align}
because $\mesorebindr(\krme,h) = \microrebind(\krmi)$ holds.
\subsection{Summary of the algorithm}
\label{sec:algsummary}

Assume that we have a cubic (3D) or square (2D) domain of width $L$, discretized by a Cartesian mesh with voxels of width $\voxsize$. Consider a reversible reaction $\revreaction{A}{B}{\krmi}{\kdmi}{C}$, where $\krmi$ and $\kdmi$ are the microscopic reaction rates. Let $D=D_A+D_B$, where $D_A$ and $D_B$ are the dissociation rates of species $A$ and $B$, respectively. Let $\sigma = \sigma_A+\sigma_B$ be the reaction radius of an $A$ and a $B$ molecule.

The critical mesh sizes are given by
\begin{align}
\hstarinf \approx \begin{cases}
5.1\sigma\quad (2D)\\
3.2\sigma\quad (3D),
\end{cases}
\end{align}
for the standard RDME, and the critical mesh sizes for the generalized RDME are given by
\begin{align}
\hstarinfnl \approx \begin{cases}
1.06\sigma\quad (2D)\\
1.08\sigma\quad (3D),
\end{cases}
\end{align}

We now wish to simulate this system on the mesoscopic scale with the generalized RDME. The results of this section can be summarized as follows:
\begin{itemize}
\item For $h\geq\hstarinf$: The generalized RDME reduces to the standard RDME. Thus $r=0$ and $\krme = \localrated(\krmi,\voxsize)$. Molecules react only when occupying the same voxel. The dissociation rate is given by $\kdme = \voxsize^d\kdmi\krme/\krmi$, as shown in \cite{HHP2}.
\item For $\hstarinfnl<h<\hstarinf$: We match both the mean binding time, and the mean rebinding time, of the $A$ and $B$ molecules by choosing $r$ and $\krme$ as in \eqref{eq:ap-params1} and \eqref{eq:ap-params2}. Now molecules react with an intensity of $r\krme$ when occupying neighboring voxels, and with an intensity of $(1-2dr)\krme$ when occupying the same voxel. The dissociation rate is simply given by $\kdme=\krmi$.
\item For $h<\hstarinfnl$ we can no longer match the mean rebinding time, and the accuracy deteriorates with decreasing $h$. 
\end{itemize}

\section{Numerical results}
\label{sec:results}

\subsection{Rebinding-time distributions}

Consider a system of two species $A$ and $B$, with one molecule of each. The $A$ molecule is fixed at the origin, while the $B$ molecule diffuses freely in space. In \cite{HHP2} it was shown that the local RDME matched the microscopic rebinding-time distribution for a reversibly reacting pair down to $t^*\sim (\hstarinf)^2/(2\diffconst)$. For $t<t^*$, the behavior is inevitably going to be different, as the accuracy of the RDME is inherently limited by the spatial resolution.

With the generalized RDME, we can match both the average binding times as well as the average rebinding times for $\hstarinf\geq h\geq \hstarinfnl$, and thus we could hope that also the error in distribution will be small at timescales of $(\hstarinfnl)^2/(2D)<t<(\hstarinf)^2/(2D)$.

In Fig. \ref{fig:rebind_distribution} we compare the microscopic rebinding-time distribution to the rebinding-time distribution for the generalized RDME. As we can see, there is a good match down to a spatial resolution of approximately $\sigma$. For the finest meshes, the behavior at really short time scales is incorrect due to dissociating particles starting in the same voxel, but not reacting until they are in neighboring voxels. This introduces an error on the order of the voxel size, which will be on the order of the size of the molecules. 

\begin{figure}[htp]
\centering
\subfigure{\includegraphics[width=0.49\linewidth]{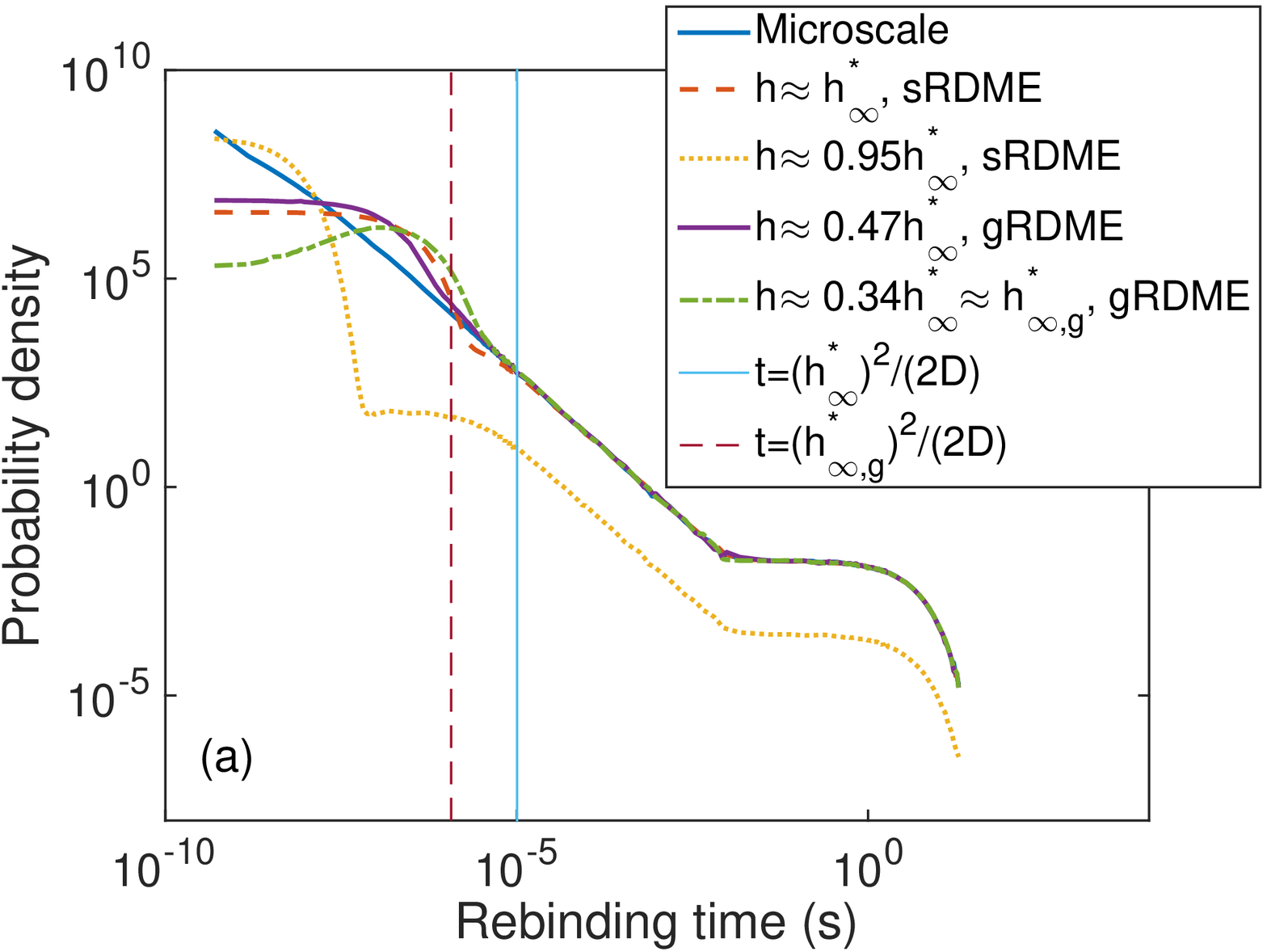}}
\subfigure{\includegraphics[width=0.49\linewidth]{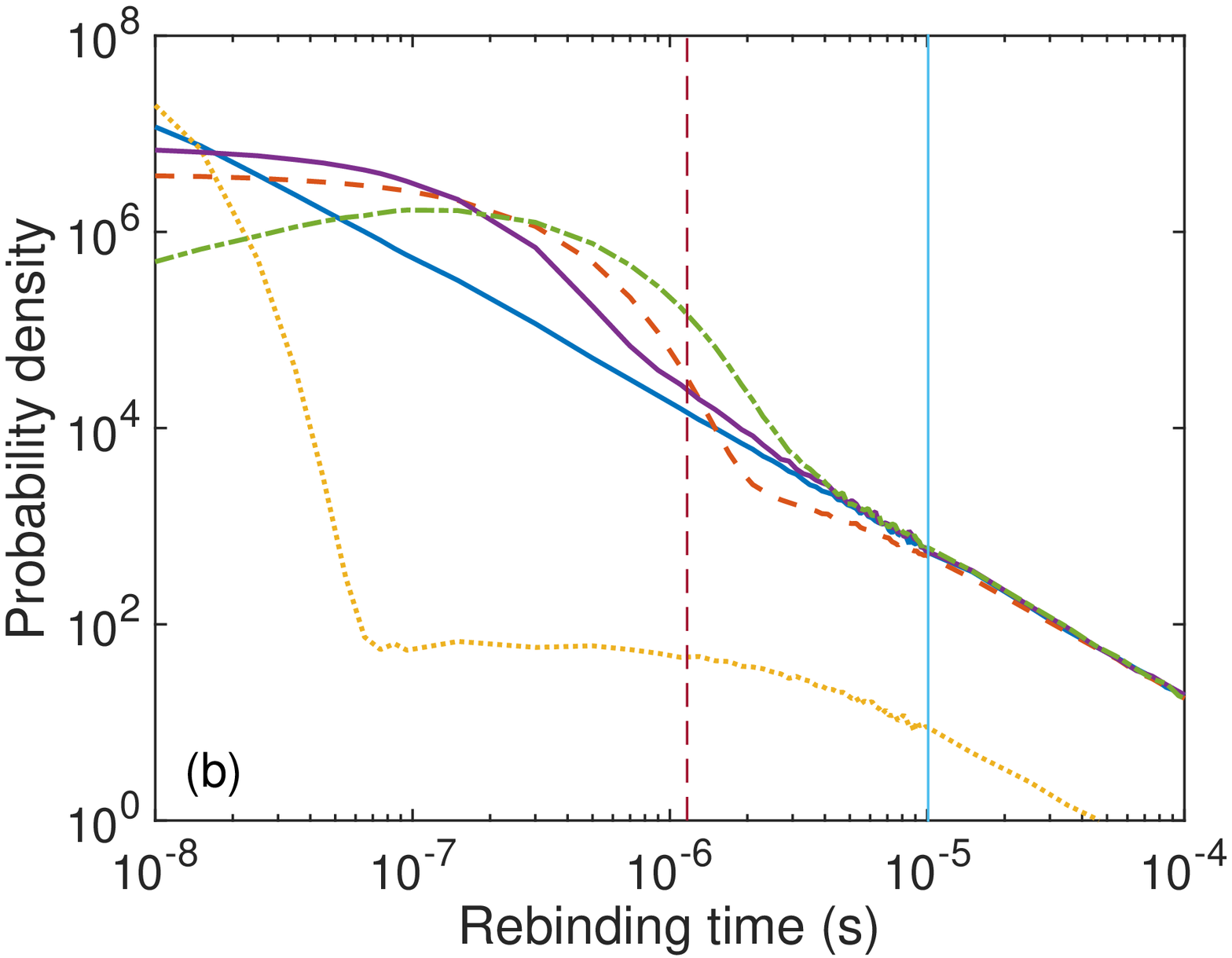}}
\subfigure{\includegraphics[width=0.49\linewidth]{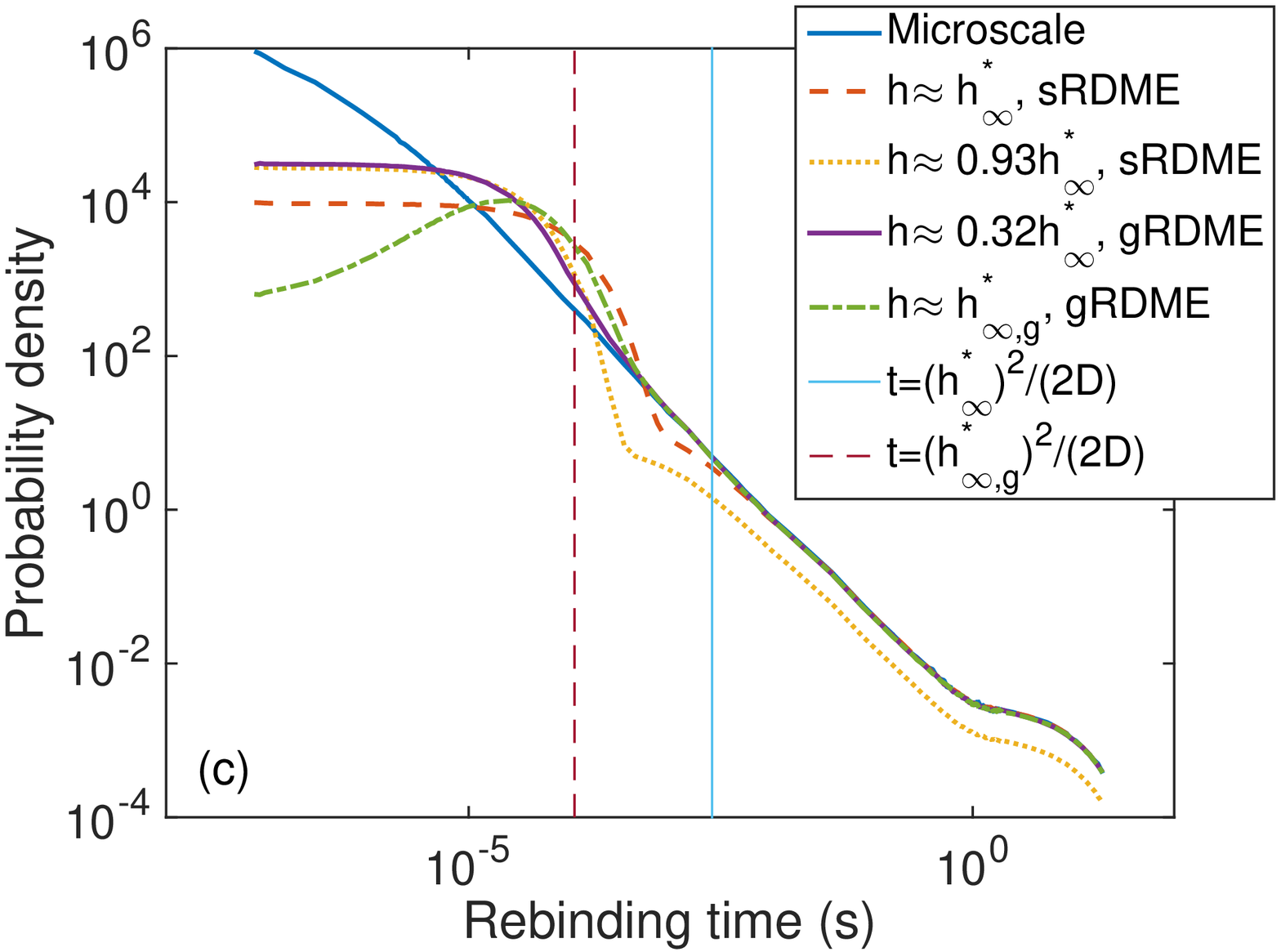}}
\subfigure{\includegraphics[width=0.49\linewidth]{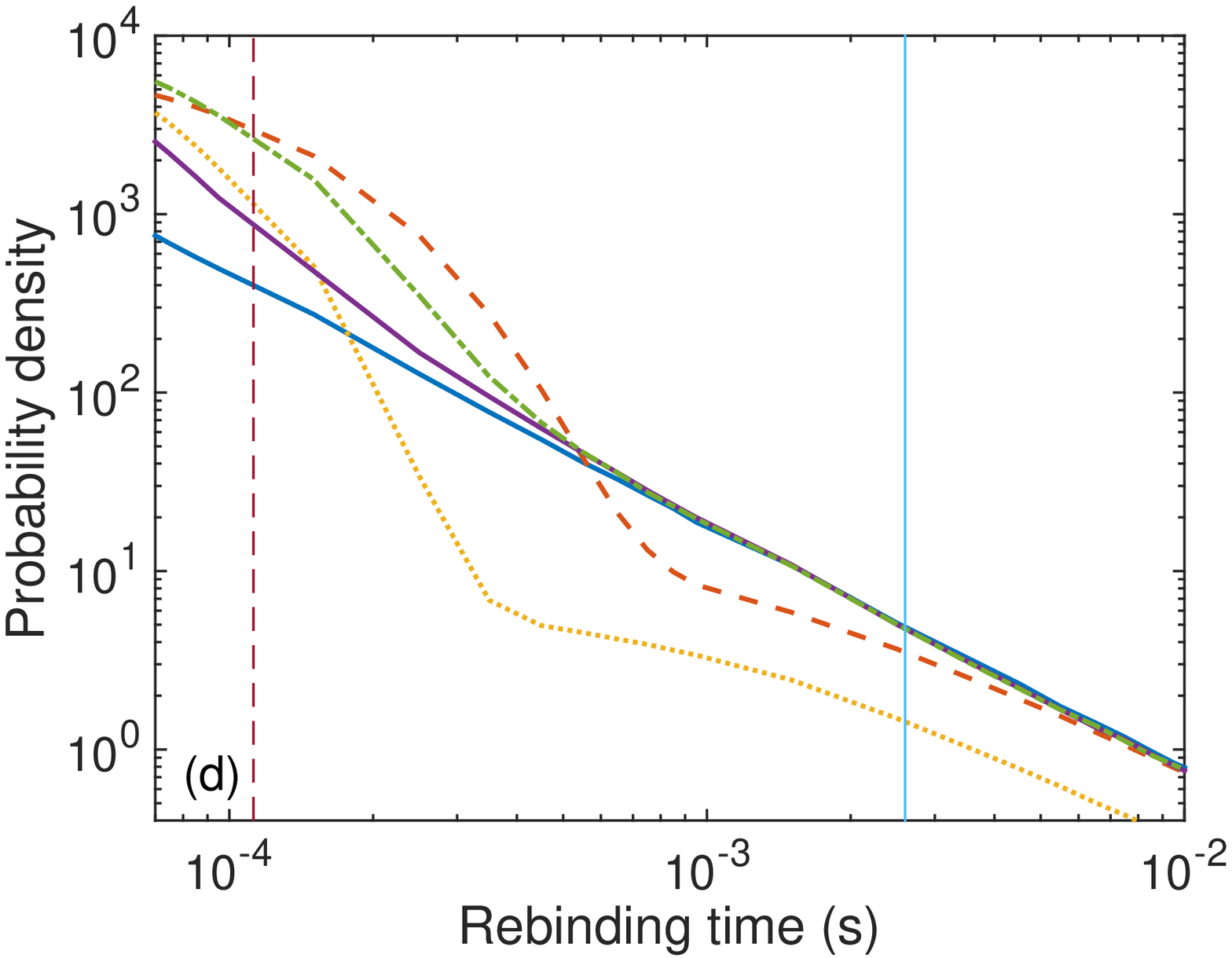}}
\caption{\label{fig:rebind_distribution}In (a) and (b) we have plotted the rebinding-time distributions in 3D. For the standard RDME we have a good match between the microscopic and mesoscopic simulations for $h\approx\hstarinf$, while the average rebinding time is underestimated for finer meshes. For the generalized RDME we see that the microscopic and mesoscopic distributions agree well for $\hstarinfnl<h<\hstarinf$ down to spatial resolution almost on the order of the size of the molecules, or a temporal resolution of approximately $(\hstarinfnl)^2/(2D)$. In (c) and (d) we have plotted the rebinding-time distributions in 2D. The conclusions are the same as for the 3D case. The parameters in (a) and (b) are given by $\sigma=\SI{2d-9}{\m}$, $D=\SI{2d-12}{\m\squared\per\s}$, $L=\SI{5.145d-7}{\m}$, and $\krmi=\SI{d-18}{\m\cubed\per\second}$. The parameters in (c) and (d) are given by $\rradius=\SI{2d-9}{\m}$, $D=\SI{2d-14}{\m\squared\per\s}$, $L=\SI{5.2d-7}{\m}$, and $\krmi=\SI{d-12}{\m\squared\per\s}$. Note that `standard RDME' has been shortened to sRDME, and `generalized RDME' to gRDME in the legends, to increase legibility.}
\end{figure}

\subsection{Convergence of the generalized RDME}

The dynamics of some systems is resolved only at a fine spatial resolution. In particular, it has been shown that fast rebinding events can affect e.g. the response time of a MAPK pathway \cite{TaTNWo10}. We consider the system
\begin{align}
\label{eq:ex3-system}
\begin{cases}
S_1 &\overset{k_d}{\rightarrow} S_{11}+S_{12} \overset{\krmi}{\rightarrow} S_2\\
S_2 &\overset{k_d}{\rightarrow} S_{21}+S_{22} \overset{\krmi}{\rightarrow} S_3,
\end{cases}
\end{align}
which has a behavior similar to the MAPK pathway of \cite{TaTNWo10}. Due to the possibility of fast rebinding events, the long-term dynamics of the system is affected by spatial correlations between newly produced molecules.

We start with an initial population of 100 $S_1$ molecules, with none of the other species present. The system is simulated for 2s, during which we sample the state of the system at 201 evenly distributed points between $t=0$ and $t=2$. We simulate the system with both the standard RDME, as well as with the generalized extension, for different voxel sizes. Let $\mathcal{S} =\{ S_1,S_{11},S_{12},S_{2},S_{21},S_{22},S_{3} \}$ . The error is then computed as
\begin{align}
\label{eq:ex3-error}
E(h) = \frac{1}{201}\sum_{i=1}^{201} \sum_{S\in\cal{S}}\left|[S]_{h,i}^{meso}-[S]_i^{micro}\right|,
\end{align}
where $[S]_i^{micro}$ is the average population of $S$ at time $t_i$, obtained with the microscopic GFRD algorithm, and where $[S]_{h,i}^{meso}$ denotes the average population of $S$ at time $t_i$ obtained at the mesoscopic scale with voxel size $h$.

After a dissociation of either an $S_1$ or $S_2$ molecule from \eqref{eq:ex3-system}, the products can rebind quickly to produce an $S_2$ or $S_3$ molecule, respectively. On the microscopic scale, the products are in contact after a dissociation event, and thus the spatial correlation will be significant. At the mesoscopic scale, the products are placed in the same voxel after a dissociation. If the voxel size is large compared to the size of the molecules, the spatial correlation will be less than on the microscale. Thus, to simulate \eqref{eq:ex3-system} accurately, we would expect a fine mesh resolution to be required.

Let $\sigma_i$ be the reaction radius of molecule $S_i$, and $\sigma_{ij}$ the reaction radius of molecule $S_{ij}$. The parameters of the model are given by
\begin{align}
\label{eq:ex3-parameters}
\begin{cases}
k_d = \SI{10}{\per\second}\\
\krmi = \SI{d-19}{\m\cubed\per\s}
\end{cases}
\begin{cases}
\sigma_1 = \SI{d-9}{\m}\\
\sigma_{11} = \sigma_{12} = \SI{0.8d-9}{\m}\\
\sigma_2 = \SI{2d-9}{\m}\\
\sigma_{21}=\sigma_{22} = \SI{1.8d-9}{\m}\\
\sigma_{3} = \SI{2.5d-9}{\m}.
\end{cases}
\end{align}
For simplicity, we let all species have the same diffusion rate, $D=\SI{d-12}{\m\squared\per\s}$. The $S_1$ molecules are initialized uniformly in a cube of volume $\SI{d-18}{\m\cubed}$.

There is a critical lower bound on the mesh size associated with each of the system's bimolecular reaction events
\begin{align}
\label{eq:ex3-lower_bound_h}
\begin{cases}
h_1 := h^{\ast}_{\infty}(\sigma_{11}+\sigma_{12})\approx 5.0815\cdot 10^{-9}\\
h_2 := h^{\ast}_{\infty}(\sigma_{21}+\sigma_{22})\approx 1.1433\cdot 10^{-8}.
\end{cases}
\end{align}
We know that for $\voxsize>\hstarinf$, we are unable to match both the mesoscopic mean association time and the mesoscopic mean rebinding time to the corresponding microscopic quantities. Thus, for $h>\max\{h_1,h_2\}$, we will overestimate the rebinding time for both reactions, and consequently underestimate the average $S_3$ concentration.

For $h_2>h>h_1$ the dynamics is less obvious; we are underestimating the average rebinding time for the first reaction, but overestimating the average rebinding time for the second. As we can see in Fig.~\ref{fig:ex3-fig1} (b), the positive and negative errors partly cancel out in this regime. At first the error decreases with decreasing $h$, but as we approach $h_1$, it starts to increase again. The behavior of the standard RDME is hard to predict, and \emph{a priori} we cannot be sure that a particular choice of $h$ is suitable.

In contrast, we see that the generalized RDME has a more predictable behavior, converging with decreasing $h$, and yielding an almost perfect match for $h<h_1$. The difference in behavior is due to the generalized RDME matching the average rebinding time also for $h<\hstarinf$, all the way down to $h_{2,g}:=\hstarinfnl(\sigma_{21}+\sigma_{22}) \approx 3.8934\cdot 10^{-9}$.

\begin{figure}[htp]
\centering
\subfigure{\includegraphics[width=0.49\linewidth]{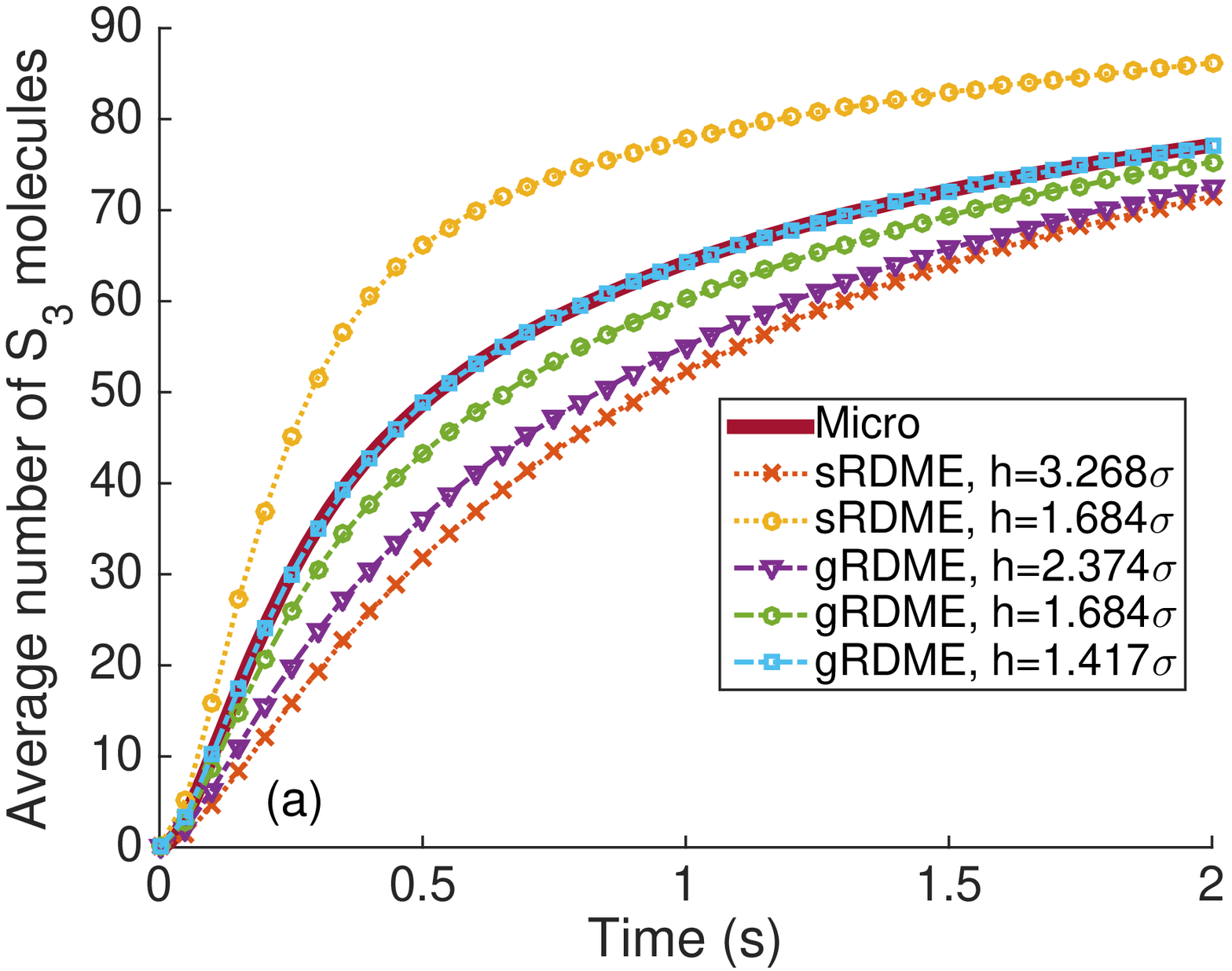}}
\subfigure{\includegraphics[width=0.49\linewidth]{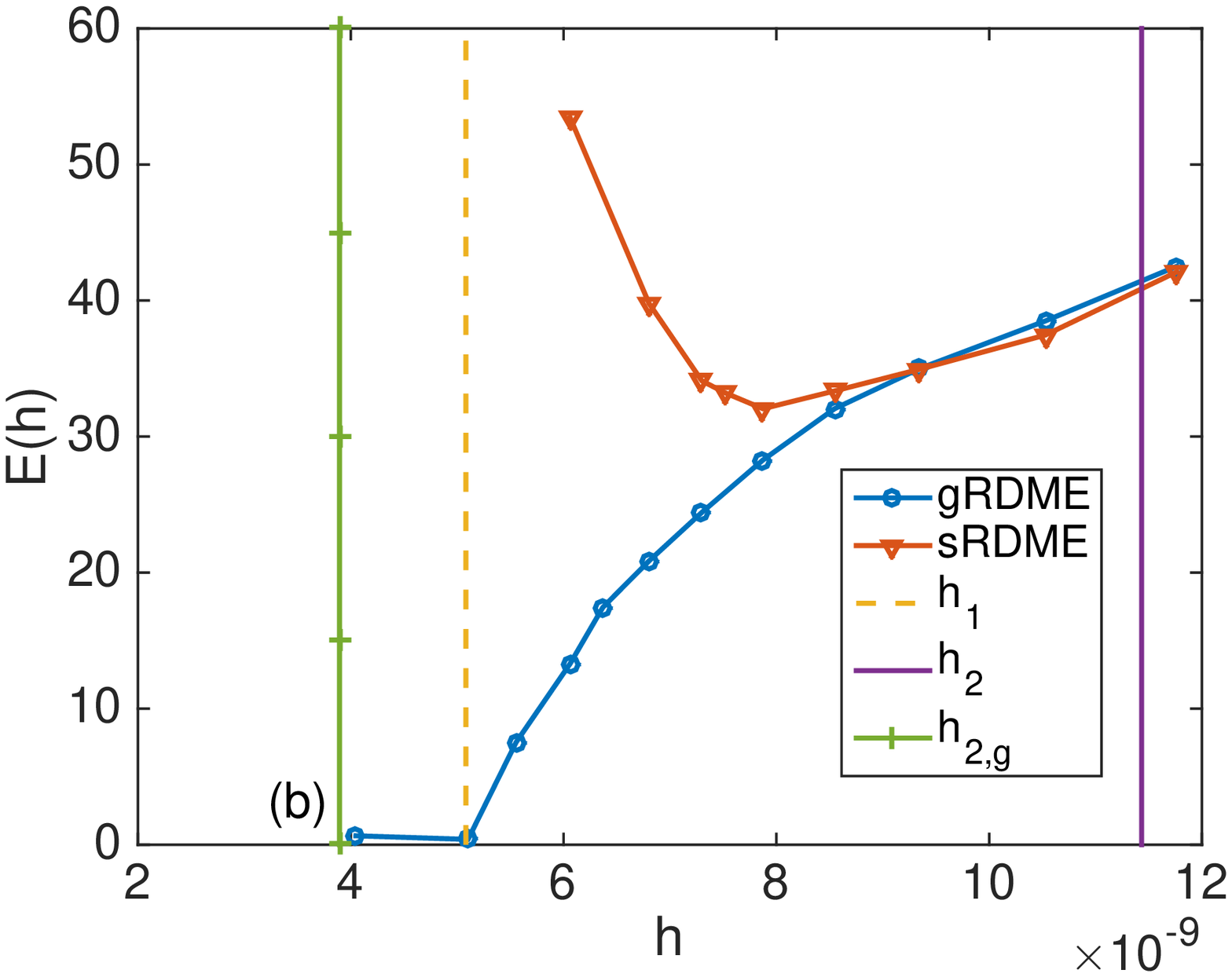}}
\caption{\label{fig:ex3-fig1}In (a) we plot the average number of $S_3$ molecules as a function of time. As we can see, for a larger value of the voxel size $h$, we underestimate the number of $S_3$ molecules. For a very fine mesh, the number of $S_3$ molecules is overestimated. Somewhere in between we may obtain a good approximation compared to the microscopic results, but then the concentration of other species in the system will be incorrect. For simulations with the generalized algorithm, the average number of $S_3$ molecules is underestimated for coarse meshes, but as we refine the mesh, the dynamics approach that of the microscopic simulations. In (b) we see that the total error, as defined in \eqref{eq:ex3-error}, decreases down to $h_1$ for the generalized RDME, while error for the local RDME first decreases slightly but then increases as we refine the mesh further. We obtain an almost perfect match between the microscopic and the generalized RDME as $h$ approaches $h_1$, and all the way down to $h_{2,g}$. As in the legend of Fig. 1, we have shortened `standard RDME' to sRDME, and `generalized RDME' to gRDME in the legends.}
\end{figure}
\section{Summary}

For the standard RDME there is a lower bound on the mesh size, $\hstarinf$, below which the accuracy deteriorates. For $\voxsize>\hstarinf$ we match the mean binding time of two molecules with the mesoscopic reaction rate given by $\localrated(\krmi,h)$. For $\voxsize=\hstarinf$ we match both the mean binding time and the mean rebinding time of the two molecules.

Some systems display fine-grained dynamics, requiring a fine spatial resolution to be simulated at the mesoscopic scale. By generalizing the standard RDME to allow reactions between molecules in neighboring voxels, we obtain a lower bound on the mesh size given by $\hstarinfnl$, where $\hstarinfnl$ is on the order of the reaction radius of a pair of molecules. We derived analytical expressions for the reaction rates, and showed that we match both the mean binding time and the mean rebinding time for $\hstarinfnl\leq\voxsize\leq\hstarinf$. For $\voxsize>\hstarinf$, the generalized RDME and the standard RDME agree.

We studied the accuracy of the generalized RDME in two numerical examples. In the first example we showed that we not only match the mean rebinding time for $\hstarinfnl\leq\voxsize\leq\hstarinf$, but that we also obtain a good match between the rebinding-time distributions at the two scales. In the second example we considered a system that cannot be accurately simulated with the standard RDME, as the mesh resolution required is below the fundamental lower limit $\hstarinf$. We showed that with the generalized RDME we are able to simulate the system to high accuracy, and we showed how we obtain convergence to the microscopic simulations with decreasing mesh size $h$.

\section{Acknowledgments}

This work was funded by NSF Grant No. DMS-1001012, NIGMS of the NIH under Grant No. R01- GM113241-01, Institute of Collaborative Biotechnologies Grant No. W911NF-09-D-0001 from the U.S. Army Research Office, NIBIB of the NIH under Grant No. R01-EB014877-01, and U.S. DOE Grant No. DE-SC0008975. The content of this paper is solely the responsibility of the authors and does not necessarily represent the official views of these agencies.

\newcommand{\noopsort}[1]{}

\end{document}